\algrenewcommand\algorithmicrequire{\textbf{Input:}}
\algrenewcommand\algorithmicensure{\textbf{Output:}}
\newcommand\argmin{\mathop{\mathrm{argmin}}}
\newcommand\calR{\mathcal{R}}
\newcommand\calN{\mathcal{N}}
\newcommand\calX{\mathcal{X}}
\newcommand\calY{\mathcal{Y}}
\newcommand\calD{\mathcal{D}}
\newcommand\calA{\mathcal{A}}
\newcommand\calP{\mathcal{P}}
\newcommand\calS{\mathcal{S}}
\crefname{hypothesis}{Hypothesis}{Hypotheses}
\numberwithin{equation}{section}
\numberwithin{figure}{section}
\numberwithin{table}{section}
\title{A new interpretation of the weighted pseudoinverse and its applications 
	\thanks{Submitted to the editors DATE.
}}
\author{Haibo Li\thanks{School of Mathematics and Statistics, The University of Melbourne, Parkville, VIC 3010, Australia.
  (\email{haibo.li@unimelb.edu.au}).}
}
\begin{document}

\maketitle

\begin{abstract}
	Consider the generalized linear least squares (GLS) problem $\min\|Lx\|_2 \ \mathrm{s.t.} \ \|M(Ax-b)\|_2=\min$. The weighted pseudoinverse $A_{ML}^{\dag}$ is the matrix that maps $b$ to the minimum 2-norm solution of this GLS problem.
	By introducing a linear operator induced by $\{A, M, L\}$ between two finite-dimensional Hilbert spaces, we show that the minimum 2-norm solution of the GLS problem is equivalent to the minimum norm solution of a linear least squares problem involving this linear operator, and $A_{ML}^{\dag}$ can be expressed as the composition of the Moore-Penrose pseudoinverse of this linear operator and an orthogonal projector. With this new interpretation, we establish the generalized Moore-Penrose equations that completely characterize the weighted pseudoinverse, give a closed-form expression of the weighted pseudoinverse using the generalized singular value decomposition (GSVD), and propose a generalized LSQR (gLSQR) algorithm for iteratively solving the GLS problem. We construct several numerical examples to test the proposed iterative algorithm for solving GLS problems. Our results highlight the close connections between GLS, weighted pseudoinverse, GSVD and gLSQR, providing new tools for both analysis and computations.
\end{abstract}

\begin{keywords}
	generalized least squares, weighted pseudoinverse, generalized Moore-Penrose equations, GSVD, generalized Golub-Kahan bidiagonalization, generalized LSQR
\end{keywords}

\begin{MSCcodes}
	15A09, 15A22, 65F10, 65F20
\end{MSCcodes}

\section{Introduction}
Consider the generalized linear least squares (GLS) problem 
\begin{equation}\label{GLS0}
	\min_{x\in\mathbb{R}^{n}}\|Lx\|_2 \ \ \ \mathrm{s.t.} \ \ \ 
	\|M(Ax-b)\|_{2}=\min
\end{equation}
where $A\in\mathbb{R}^{m\times n}$, $M\in\mathbb{R}^{q\times m}$ and $L\in\mathbb{R}^{p\times n}$. In some literature it is also called the weighted linear least squares problem. The GLS problem generalizes the standard least squares (LS) problem $\min\|x\|_2 \ \mathrm{s.t.} \ \|Ax-b\|_{2}=\min$ by incorporating weighting matrices $M$ and $L$, which introduces additional constraints and objectives tailored to specific data characteristics. For instance, $M$ might be used to increase
the relative importance of accurate measurements, while $L$ could adjust the regularization or constraint structure to improve stability or enforce certain properties in the solution \cite[\S 6.1]{Golub2013}. Such problems arise in a variety of practical applications, including scatter data approximation \cite{wendland2004scattered}, functional data analysis \cite{Ramsay2009}, ill-posed inverse problems \cite{engl1996regularization}, surface fitting problems \cite{weiss2002advanced} and many others. 

The GLS problem is relatively simple when $L$ has full column rank, where it must have a unique solution. The case that both $L$ and $M$ have full column rank has been extensively studied in earlier literature; see e.g. \cite{price1964matrix,ben2006generalized}. For general rectangular matrices $M$ and $L$, in \cite{mitra1974projections} the authors proposed the concept of projection under seminorms to study the existence and structure of the solutions of \cref{GLS0}. A well-known result is that the uniqueness of the solution of \cref{GLS0} is equivalent to $\calN(MA)\cap\calN(L)=\{\mathbf{0}\}$, where $\calN(\cdot)$ is the null space of a matrix. In the case of nonuniqueness, there is a unique minimum 2-norm solution of \cref{GLS0}. The GLS problem was then intensively studied in \cite{elden1982weighted}, where the author gave the expression of the general solutions. Specifically, the author demonstrated that the minimum 2-norm solution can be written as $A_{ML}^{\dag}b$, where $A_{ML}^{\dag}\in\mathbb{R}^{n\times m}$ is the so-called $M,L$ weighted pseudoinverse of $A$ that shares several properties analogous to the Moore-Penrose pseudoinverse. 

The weighted pseudoinverse is a generalization of the Moore-Penrose pseudoinverse of a single matrix. Since the concept of the pseudoinverse was first introduced by Moore \cite{moore1920reciprocal,Moore1935-MOOGA} and later by Penrose \cite{penrose1955generalized,penrose1956best}, it has received considerable attention and many applications \cite{golub1965calculating,milne1968oblique,marquardt1970generalized,campbell2009generalized}, and there have been several various generalizations of the Moore-Penrose pseudoinverse, such as the restricted pseudoinverse proposed for linear-constrained LS problems \cite{minamide1970restricted,hartung1979note,callon1987method}, the product generalized inverse \cite{cline1970extension}, and another type of weighted pseudoinverse (different from that in this paper) \cite{chipman1964least,ward1977limit,ward1971weighted}. Among these generalizations, the weighted pseudoinverse proposed in \cite{elden1982weighted} has attracted significant attention. For example, in \cite{hansen1995lanczos}, the authors proposed an algorithm for computing the GSVD of $\{A,L\}$, where at each iteration $L_{IA}^{\dag}z$ needs to be computed for some vector $z$; here $I$ is the identity matrix. Moreover, using $L_{IA}^{\dag}$, the general-form Tikhonov regularization problem $\min_{x\in\mathbb{R}^{n}}\{\|Ax-b\|_{2}^2+\lambda\|Lx\|_{2}^2\}$ can be transformed to a standard-form problem, which is much easier for analysis and computations; see e.g. \cite{Hansen1998,Hansen2010}. 

However, computing the weighted pseudoinverse and solving the GLS problem are both quite challenging. Existing methods primarily depend on matrix factorizations, which are effective only for small-scale problems. For the case that $\calN(MA)\cap\calN(L)=\{\mathbf{0}\}$ with $M=I_{m}$, the identity matrix of order $m$, the author in \cite{elden1982weighted} gave a closed-form expression of $A_{IL}^{\dag}$ using the generalized singular value decomposition (GSVD) of $\{A,L\}$, which can be used to compute the solution of \cref{GLS0}. Furthermore, he proposed a more efficient algorithm for computing $A_{IL}^{\dag}$ based on QR factorizations, which avoids the GSVD computation. For large-scale matrices, however, there is a lack of efficient methods for computing $A_{ML}^{\dag}$ or for iteratively computing $A_{ML}^{\dag}b$ for a given $b$. This may partly be due to an insufficient understanding of the properties of the weighted pseudoinverse. In contrast, the properties of the Moore-Penrose pseudoinverse $A^{\dag}$ are well-established, and a variety of computational methods are available for it. For example, there is a closed-form expression of $A^{\dag}$ by using the singular value decomposition (SVD) of $A$, and $A^{\dag}b$ is the minimum 2-norm solution of $\min_{x\in\mathbb{R}^{n}}\|Ax-b\|_2$, which can be computed efficiently by the iterative solver LSQR \cite{Paige1982}. It would be beneficial to gain new insights into the weighted pseudoinverse and to establish deeper analogies with the Moore-Penrose pseudoinverse. This could enable the development of efficient iterative methods for computing $A_{ML}^{\dag}$.

In this paper, we provide a new interpretation of the weighted pseudoinverse and use it to design an iterative algorithm for computing $A_{ML}^{\dag}b$. To achieve this, we first introduce a linear operator $\calA$ between two finite dimensional Hilbert spaces, with non-Euclidean inner products induced by the matrices $\{A,M,L\}$. Then we formulate an operator-type LS problem involving $\calA$, showing that its minimum norm solution coincides with the minimum 2-norm solution of \cref{GLS0}. This result establishes a connection between $A_{ML}^{\dag}$ and $\calA^{\dag}$, the Moore-Penrose pseudoinverse of $\calA$. Building on this connection, we derive a set of generalized Moore-Penrose equations that fully characterize the weighted pseudoinverse. Additionally, by using the generalized singular value decomposition (GSVD) of $\{A,L\}$, we give a closed-form expression for $A_{IL}^{\dag}$, which is applicable regardless of whether $\calN(A)\cap\calN(L)=\{\mathbf{0}\}$ or not. 
To address the practical computational challenges involving $A_{ML}^{\dag}$, we extend the classical Golub-Kahan bidiagonalization (GKB) method \cite{golub1965calculating} and propose a novel iterative algorithm called the \textit{generalized LSQR} (gLSQR). The design of this algorithm leverages the connection between $A_{ML}^{\dag}$ and $\calA^{\dag}$, which can efficiently compute $A_{ML}^{\dag}b$ by iteratively refining the solutions to the GLS problem \cref{GLS0} without requiring any matrix factorizations. To demonstrate the effectiveness of gLSQR, we construct several numerical examples of GLS problems and show its ability to compute solutions with high accuracy across these diverse scenarios. The results in this paper highlight the close connections between GLS, weighted pseudoinverse, GSVD and gLSQR, providing new tools for both analysis and computations of related applications.


The paper is organized as follows. In \Cref{sec2} we review several basic properties of the LS problem and Moore-Penrose pseudoinverse. In \Cref{sec3} we analyze the GLS problem from the perspective of an equivalent operator-type LS problem. Building on this perspective, we offer a new interpretation of the weighted pseudoinverse and present several basic properties. In \Cref{sec4} we generalize the GKB method and propose the gLSQR algorithm for iterative computing $A_{ML}^{\dag}b$. In \Cref{sec5} we construct several nontrial numerical examples to test the gLSQR algorithm. Finally, we conclude the paper in \Cref{sec6}.

Throughout the paper, we denote by $\calN(\cdot)$ and $\calR(\cdot)$ the null space and range space of a matrix or linear operator, respectively, denote by $\mathbf{0}$ the zero matrix/vector with orders clear from the context, and denote by $\mathrm{span}\{\cdot\}$ the subspace spanned by a group of vectors or columns of a matrix.

\section{Linear least squares and pseudoinverse of linear operators}\label{sec2}
We review several basic properties of the LS problems and the pseudoinverse of linear operators in the context of Hilbert spaces; see e.g. \cite{groetsch1977generalized,ben2006generalized} for more details. These properties will be used in the subsequent sections. 

Let $\calX$ and $\calY$ be two Hilbert spaces, and let $T:\calX\rightarrow\calY$ be a bounded linear operator where its adjoint is denoted by $T^{*}$. Consider the linear operator equation $Tx=y$, which has a solution if and only if $y\in\calR(T)$. Otherwise, we consider the least squares solution. An element $x\in\calX$ is called a least squares solution of $Tx=y$ if it satisfies
\begin{equation}
	\|Tx-y\|_{\calY} = \inf\{\|Tz-y\|_{\calY}: z\in\calX\} .
\end{equation}
If the set of all least squares solutions has an element of minimum $\calX$-norm, i.e.
\begin{equation}
	\|x\|_{\calX} = \inf\{\|z\|_{\calX}: z \ \mathrm{is \ a \ least \ squares \ solution \ of \ } Tx=y\},
\end{equation}
then we call such an $x$ a best-approximate solution of $Tx=y$. The following well-known result describes the existence and uniqueness of the least squares solution and best-approximate solution.

\begin{theorem}\label{LS_criterion}
	For the linear operator equation $Tx=y$, the following properties holds:
	\begin{enumerate}
		\item[(1)] it has a least squares solution if and only if $y\in\calR(T)+\calR(T)^{\perp}$;
		\item[(2)] if (1) is satisfied, then $x$ is a least squares solution if and only if 
			\begin{equation}\label{normal}
				T^{*}Tx = T^{*}y
			\end{equation}
			holds, which is called the normal equation;
		\item[(3)] if (1) is satisfied, then $x$ is the unique best-approximate solution if and only if \cref{normal} is satisfied and $x\in\calN(T)^{\perp}$.
	\end{enumerate}
\end{theorem}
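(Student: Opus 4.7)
The plan is to establish all three parts by exploiting the Hilbert-space orthogonal decomposition of $\calY$ relative to $\overline{\calR(T)}$, together with the standard identity $\overline{\calR(T)}^{\perp}=\calN(T^{*})$. Let $P$ denote the orthogonal projector of $\calY$ onto $\overline{\calR(T)}$.

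For (1), the key observation is that $x$ is a least squares solution if and only if $Tx$ is the point of $\calR(T)$ nearest to $y$. By the projection theorem, the unique point of $\overline{\calR(T)}$ nearest to $y$ is $Py$, so a least squares solution exists if and only if $Py\in\calR(T)$. Writing $y=Py+(I-P)y$ with $(I-P)y\in\calR(T)^{\perp}$, this condition is equivalent to $y\in\calR(T)+\calR(T)^{\perp}$, proving (1). For (2), under the hypothesis of (1), $x$ is a least squares solution if and only if $Tx=Py$, equivalently $Tx-y\in\calR(T)^{\perp}=\calN(T^{*})$, equivalently $T^{*}Tx=T^{*}y$. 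The cleanest verification of the sufficiency direction uses the Pythagorean identity
\[
\|Tz-y\|_{\calY}^{2}=\|T(z-x)\|_{\calY}^{2}+\|Tx-y\|_{\calY}^{2}
\]
valid whenever $Tx-y\perp\calR(T)$, which shows that such an $x$ minimizes the residual.

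For (3), I would first observe that the set of least squares solutions, when nonempty, is the affine subspace $x_{0}+\calN(T)$ for any particular least squares solution $x_{0}$; this follows from (2) together with $\calN(T^{*}T)=\calN(T)$, which is itself an immediate consequence of $\langle T^{*}Tu,u\rangle_{\calX}=\|Tu\|_{\calY}^{2}$. The projection theorem applied to the closed subspace $\calN(T)\subset\calX$ then yields a unique minimum-norm element of $x_{0}+\calN(T)$, characterized as the unique element of the affine set lying in $\calN(T)^{\perp}$. This gives the claimed equivalence.

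The only genuinely subtle point is in (1), namely the distinction between $\calR(T)$ and its closure; if $\calR(T)$ is closed then $\calR(T)+\calR(T)^{\perp}=\calY$ and a least squares solution exists for every $y$. In the finite-dimensional setting used later in the paper this closedness is automatic, so part (1) is only nontrivial in the general Hilbert-space formulation recorded here, while parts (2)–(3) carry the real content needed for the subsequent analysis of the weighted pseudoinverse.
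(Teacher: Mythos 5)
Your proof is correct: part (1) via the projection onto $\overline{\calR(T)}$, part (2) via $\calR(T)^{\perp}=\calN(T^{*})$ and the Pythagorean identity, and part (3) via the affine solution set $x_{0}+\calN(T)$ (using $\calN(T^{*}T)=\calN(T)$) and its unique element in $\calN(T)^{\perp}$. The paper itself states this theorem without proof, citing it as well known from the references on generalized inverses, and your argument is precisely the standard projection-theorem proof given there, so there is nothing to reconcile; the only step worth a half-line more is the converse in (1), namely that $y=Tw+u$ with $u\in\calR(T)^{\perp}$ forces $Py=Tw\in\calR(T)$ because $P$ annihilates $\calR(T)^{\perp}$.
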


The best-approximate solution is closely related to the Moore-Penrose pseudoinverse of $T$, which is the linear operator mapping $y$ to the best-approximate solution of $Tx=y$. Based on \Cref{LS_criterion}, the standard definition of the Moore-Penrose pseudoinverse is as follows.

\begin{definition}\label{moore_penrose}
	For the bounded linear operator $T:\calX\rightarrow\calY$, define its restriction as $\widetilde{T}:=T|_{\calN(T)^{\perp}}:\calN(T)^{\perp}\rightarrow\calR(T)$. The Moore-Penrose pseudoinverse $T^{\dag}$ of $T$ is defined as the unique linear extension of $\widetilde{T}^{-1}$ to $\calD(T^{\dag}):=\calR(T)+\calR(T)^{\perp}$ such that $\calN(T^{\dag})=\calR(T)^{\perp}$.
\end{definition}

It has been proved that the following four ``Moore-Penrose'' equations hold:
\begin{equation}\label{MP_equation}
	\begin{cases}
		TT^{\dag}T = T, \\
		T^{\dag}TT^{\dag} = T^{\dag}, \\
		T^{\dag}T = \calP_{\calN(T)^\perp}, \\
   		TT^{\dag} = \calP_{\overline{R(T)}}|_{\calD(T^{\dag})},
	\end{cases}
\end{equation}
where $\calP_{\calS}$ is the orthogonal operator onto a subspace $\calS$.
Moreover, the Moore-Penrose equations uniquely characterize $T^{\dag}$, i.e. there exists a unique linear operator $T^{\dag}$ that satisfies equations \cref{MP_equation}, and the last two conditions can even be relaxed as that $TT^{\dag}$ and $T^{\dag}T$ are two orthogonal projectors. The following limit property holds:
\begin{equation}\label{limit0}
	\lim_{\delta \searrow 0} \left(T^*T + \delta I\right)^{-1} T^*
	= \lim_{\delta \searrow 0} T^* \left(TT^* + \delta I\right)^{-1}
\end{equation}
where $I:\calX\rightarrow\calX$ is the identity operator. Using the pseudoinverse, the following well-known result describes the structure of the least squares solutions.
\begin{theorem}
	Let $y\in\calD(T^{\dag})$. Then $x^{\dag}:=T^{\dag}y$ is the unique best-approximate solution of $Tx=y$, and the set of all least squares solutions is $x^{\dag}+\calN(T)$.
\end{theorem}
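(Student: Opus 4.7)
The plan is to combine the Moore--Penrose equations \cref{MP_equation} with the normal-equation characterization of least squares solutions in \Cref{LS_criterion}. The hypothesis $y \in \calD(T^{\dag}) = \calR(T) + \calR(T)^{\perp}$ immediately supplies part (1) of \Cref{LS_criterion}, so least squares solutions exist. What remains is to verify that $x^{\dag} := T^{\dag}y$ fulfills both conditions of part (3): the normal equation $T^{*}Tx^{\dag} = T^{*}y$ and the orthogonality $x^{\dag} \in \calN(T)^{\perp}$.

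First I would check the normal equation using the identity $TT^{\dag} = \calP_{\overline{\calR(T)}}|_{\calD(T^{\dag})}$ from \cref{MP_equation}. This gives $Tx^{\dag} = \calP_{\overline{\calR(T)}} y$, and since $y - \calP_{\overline{\calR(T)}} y \in \calR(T)^{\perp} = \calN(T^{*})$, applying $T^{*}$ yields $T^{*}Tx^{\dag} = T^{*}\calP_{\overline{\calR(T)}} y = T^{*} y$. The orthogonality $x^{\dag} \in \calN(T)^{\perp}$ follows directly from the construction in \Cref{moore_penrose}: decomposing $y = y_{1} + y_{2}$ with $y_{1} \in \calR(T)$ and $y_{2} \in \calR(T)^{\perp} = \calN(T^{\dag})$, one has $T^{\dag}y = \widetilde{T}^{-1} y_{1}$, which lies in the range of $\widetilde{T}^{-1}$, namely $\calN(T)^{\perp}$. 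Part (3) of \Cref{LS_criterion} then delivers the uniqueness and best-approximate property of $x^{\dag}$.

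For the full solution set, I would appeal to part (2) of \Cref{LS_criterion}: the least squares solutions coincide with the solutions of the normal equation, which form the affine subspace $x^{\dag} + \calN(T^{*}T)$. The only nontrivial step is the identification $\calN(T^{*}T) = \calN(T)$, which follows from $\|Tx\|_{\calY}^{2} = \langle T^{*}Tx, x\rangle_{\calX}$: if $T^{*}Tx = 0$ then $Tx = 0$, and the reverse inclusion is immediate. This yields the claimed description $x^{\dag} + \calN(T)$.

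The main obstacle is essentially bookkeeping with closures and adjoints: one must recall that for bounded $T$ one has $\calR(T)^{\perp} = \calN(T^{*}) = \overline{\calR(T)}^{\perp}$, so that $T^{*}$ indeed annihilates the residual $y - \calP_{\overline{\calR(T)}} y$, and that the decomposition of $y$ is unambiguous because the sum $\calR(T) + \calR(T)^{\perp}$ is orthogonal (even when it is not closed). In the finite-dimensional setting that governs the rest of the paper, every range is closed and these subtleties dissolve, so the proof specializes cleanly.
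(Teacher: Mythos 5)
Your argument is correct. The paper itself does not prove this statement---it is quoted as a well-known result from the cited literature on generalized inverses---so there is no internal proof to compare against; what you give is the standard derivation, and it uses exactly the ingredients the paper has already set up. Verifying the normal equation via $TT^{\dag}=\calP_{\overline{\calR(T)}}|_{\calD(T^{\dag})}$ together with $\calN(T^{*})=\calR(T)^{\perp}=\overline{\calR(T)}^{\perp}$, obtaining $x^{\dag}\in\calN(T)^{\perp}$ directly from \Cref{moore_penrose}, and then invoking part (3) of \Cref{LS_criterion} is precisely how this is done in the references; likewise the description of the solution set as $x^{\dag}+\calN(T^{*}T)$ with the elementary identity $\calN(T^{*}T)=\calN(T)$ from $\|Tx\|_{\calY}^{2}=\langle T^{*}Tx,x\rangle_{\calX}$ is the standard closing step. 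You also correctly dispose of the only genuine subtleties in the infinite-dimensional setting: the orthogonal (hence unique) splitting of $y$ over $\calR(T)+\calR(T)^{\perp}$ even when $\calR(T)$ is not closed, and the fact that $T^{*}$ annihilates the residual $y-\calP_{\overline{\calR(T)}}y$. No gaps.
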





Now we come back to the settings with matrices. By treating $A\in\mathbb{R}^{m\times n}$ as a linear operator between the Euclidean spaces $\mathbb{R}^{n}$ and $\mathbb{R}^{m}$, all the above results directly apply to $A$. Let the SVD of $A$ be $U^{\top}AV = \Sigma$,
where $\Sigma = \begin{pmatrix}
    \Sigma_{r} &  \\
			& \mathbf{0}
\end{pmatrix}\in\mathbb{R}^{m\times n}$ with $\Sigma_{r}=\mathrm{diag}(\sigma_1,\dots,\sigma_r)\in\mathbb{R}^{r\times r}$, $\sigma_1\geq\dots\geq\sigma_r>0$, and $U=(u_1,\dots,u_m)\in\mathbb{R}^{m\times m}$ and $V=(v_1,\dots,v_n)\in\mathbb{R}^{n\times n}$ are orthogonal matrices. The pseudoinverse of $A$ has the expression $A^{\dag}=V\Sigma^{\dag}U^{\top}$ with $\Sigma^{\dag} = \begin{pmatrix}
    \Sigma_{r}^{-1} &  \\
			& \mathbf{0}
\end{pmatrix}\in\mathbb{R}^{n\times m}$, and the LS problem $\min_{x\in\mathbb{R}^{n}}\|Ax-b\|_2$ has a unique minimum 2-norm solution $x^{\dag} = A^{\dag}b =\sum_{i=1}^{r}\frac{u_{i}^{\top}b}{\sigma_i}v_i $. We remark that for a compact linear operator $T$, there exists an analogous decomposition to SVD, called the singular value expansion (SVE). Using the SVE of $T$, we can also give a similar expression of $x^{\dag}$ for the linear operator equation; see e.g. \cite[\S 15.4]{Kress2014}.

For large-scale matrix $A$, the LSQR algorithm is an efficient iterative approach for the LS problems $\min_{x\in\mathbb{R}^{n}}\|Ax-b\|_2$. This algorithm is based on the GKB process, where the main computations are matrix-vector products involving $A$ and $A^{\top}$. At the $k$-th step, the GKB process of $A$ and $b$ generates two groups of 2-orthogonal vectors, which form orthonormal bases of the Krylov subspaces $\mathcal{K}_{k+1}(AA^{\top},b)$ and $\mathcal{K}_{k}(A^{\top}A,A^{\top}b)$, respectively. Meanwhile, it reduces $A$ to a $(k+1)\times k$ lower bidiagonal matrix, which is then used in the LSQR algorithm to iterative compute an approximate solution. Besides, the GKB process is often used as a precursor for computing a partial SVD of a large-scale matrix. 

To summarize this section, we remark that the LS problem, Moore-Penrose pseudoinverse, SVD, GKB process and LSQR algorithm are all closely related. Each of them plays an important role in matrix computation problems, from providing theoretical analysis tools to enhancing the efficiency of numerical computations. Clearer relationships among these concepts are illustrated in \Cref{fig0} at the end of \Cref{sec4}.


\section{Generalized linear least squares and weighted pseudoinverse}\label{sec3}
First, we present a result that characterizes the solutions of the GLS problem, which allows us to reformulate the GLS problem as an equivalent operator-type LS problem. We then provide a new interpretation of the weighted pseudoinverse and establish several of its basic properties.

\subsection{Generalized linear least squares}
In the following part, we use $\|x\|_{C}$ to denote the seminorm $(x^{\top}Cx)^{1/2}$ for a symmetric positive semidefinite $C$. It becomes a norm when $C$ is strictly positive definite. The following result provides a criterion for determining a solution of the GLS problem.
\begin{theorem}\label{thm:GLS_solv}
	For the GLS problem
	\begin{equation}\label{GLS}
		  \min_{x\in\mathbb{R}^{n}}\|x\|_{Q} \ \ \ \mathrm{s.t.} \ \ \
		  \|Ax-b\|_{P}=\mathrm{min} ,
	  \end{equation}
	  where $A\in\mathbb{R}^{m\times n}$, and $P\in\mathbb{R}^{m\times m}$, $Q\in\mathbb{R}^{n\times n}$ are symmetric positive semidefinite matrices, let $G=A^{\top}PA+Q$. The following properties hold:
	  \begin{enumerate}
		\item[(1)] if $x$ is a solution of \cref{GLS}, then $\calP_{\cal{R}(G)}x$ is also a solution; conversally, if $x\in\calR(G)$ is a solution, then $x+z$ is a solution for any $z\in\calN(G)$;
		\item[(2)] the vector $x\in\mathbb{R}^{n}$ is a solution of \cref{GLS} if and only if
		\begin{equation}\label{GLS_criterion}
		  \begin{cases}
			  A^{\top}P(Ax-b)=\mathbf{0} , \\
			  x^{\top}Gz = 0 , \ \ \ \forall \ z \in \mathcal{N}(A^{\top}PA) .
		  \end{cases}
		\end{equation}
	  \end{enumerate}
\end{theorem}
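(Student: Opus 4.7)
The plan is to reduce \cref{GLS} to a two-level convex-quadratic program: first characterize the set of outer minimizers, i.e., those $x$ with $\|Ax-b\|_P=\min$, via a normal equation; then minimize $\|x\|_Q$ over that affine set. Since $P=P^{1/2}P^{1/2}$ with $P^{1/2}$ symmetric PSD, one has $\|Ax-b\|_P^2=\|P^{1/2}(Ax-b)\|_2^2$, and \Cref{LS_criterion} applied to the operator $P^{1/2}A$ with data $P^{1/2}b$ tells us that $x$ is an outer minimizer if and only if $A^\top P(Ax-b)=\mathbf{0}$. Denote the outer feasibility set by $\calS$; it is a nonempty affine subspace with direction $\calN(A^\top P A)$.

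For claim (1), the key lemma is that since $G=A^\top PA+Q$ is a sum of two symmetric PSD matrices, $\calN(G)=\calN(A^\top PA)\cap\calN(Q)$. Indeed, $Gz=\mathbf{0}$ forces $z^\top Gz=0$, hence $z^\top A^\top PAz=0$ and $z^\top Qz=0$ separately, which by positive semidefiniteness yield $A^\top PAz=\mathbf{0}$ and $Qz=\mathbf{0}$; the reverse inclusion is immediate. Because $G$ is symmetric, $\calR(G)=\calN(G)^\perp$. Now if $x$ solves \cref{GLS} and $w=\calP_{\calN(G)}x$, then $A^\top PAw=\mathbf{0}$ and $Qw=\mathbf{0}$, so $y:=x-w=\calP_{\calR(G)}x$ still satisfies $A^\top P(Ay-b)=\mathbf{0}$ and $\|y\|_Q^2=\|x\|_Q^2-2x^\top Qw+w^\top Qw=\|x\|_Q^2$, proving $y$ is also a solution. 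The converse half of (1) follows from the same Pythagorean-style expansion: adding any $z\in\calN(G)$ preserves outer feasibility (since $A^\top PAz=\mathbf{0}$) and the objective value (since $Qz=\mathbf{0}$).

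For claim (2), necessity of the normal equation is immediate from the outer reduction. Given an outer-feasible $x$, the outer feasibility set equals $x+\calN(A^\top PA)$, so $x$ is GLS-optimal iff $\|x\|_Q^2\leq\|x+z\|_Q^2$ for every $z\in\calN(A^\top PA)$. Expanding $\|x+z\|_Q^2=\|x\|_Q^2+2x^\top Qz+\|z\|_Q^2$ gives, by first-order optimality in one direction and nonnegativity of $\|z\|_Q^2$ together with the cross-term $x^\top Qz=0$ in the other, that GLS-optimality is equivalent to $x^\top Qz=0$ for every $z\in\calN(A^\top PA)$. Since any such $z$ satisfies $A^\top PAz=\mathbf{0}$, we have $x^\top Gz=x^\top A^\top PAz+x^\top Qz=x^\top Qz$, so the condition can be restated as $x^\top Gz=0$ for all $z\in\calN(A^\top PA)$, matching \cref{GLS_criterion}.

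The only subtlety is that $P$ and $Q$ are merely positive semidefinite, so the relevant quadratic forms are seminorms and strict convexity is unavailable; both existence of a minimizer and the invariance up to $\calN(G)$ have to be tracked carefully. Everything is nevertheless controlled by the identity $\calN(G)=\calN(A^\top PA)\cap\calN(Q)$, which cleanly separates the two sources of non-uniqueness, so no further obstacle is anticipated beyond routine bookkeeping.
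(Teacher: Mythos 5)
Your proof is correct, and for part (2) it takes a genuinely different and more elementary route than the paper. Part (1) you handle essentially as the paper does, via the identity $\calN(G)=\calN(A^{\top}PA)\cap\calN(Q)$ (which you, unlike the paper, actually prove). For part (2), the paper works inside the Hilbert space $(\calR(G),\langle\cdot,\cdot\rangle_{G})$: it decomposes $\calP_{\calR(G)}x$ against the subspace $\calP_{\calR(G)}(\calN(A^{\top}PA))$, proves the inclusion $\calP_{\calR(G)}(\calN(A^{\top}PA))\subseteq\calN(A^{\top}PA)$, derives a contradiction with optimality to get necessity of the second condition in \cref{GLS_criterion}, and then handles sufficiency by an existence-plus-uniqueness argument restricted to $\calR(G)$. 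You instead observe that the set of outer minimizers is the affine set $x+\calN(A^{\top}PA)$ and run a direct variational argument: expanding $\|x+tz\|_{Q}^{2}$ and letting $t\to 0^{\pm}$ gives necessity of $x^{\top}Qz=0$ for all $z\in\calN(A^{\top}PA)$ (the case $Qz=\mathbf{0}$ being trivial), while the same expansion with vanishing cross term gives sufficiency, and $x^{\top}Gz=x^{\top}Qz$ on $\calN(A^{\top}PA)$ converts this into the stated criterion. Your route is shorter, needs no existence or uniqueness statement, and treats arbitrary $x\in\mathbb{R}^{n}$ uniformly; the paper's heavier route has the side benefit of establishing facts it reuses later, notably $\calP_{\calR(G)}(\calN(A^{\top}PA))=\calR(G)\cap\calN(A^{\top}PA)$ and the $G$-inner-product framework that feed into \Cref{thm:opLS_solv} and \Cref{thm:naive}, so if one adopted your proof those auxiliary facts would have to be proved separately.
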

\begin{proof}
	First note that $G$ is symmetric positive semidefinite. Thus, any vector $x\in\mathbb{R}^{n}$ has the decomposition $x=\calP_{\calR(G)}x+\calP_{\calN(G)}x=:x_1+x_2$ and $x_1\perp x_2$, where $\perp$ is the orthogonal relation in Euclidean spaces. Using the relation $\calN(G)=\calN(A^{\top}PA)\cap \calN(Q)$, we can verify that 
	\begin{equation*}
		\|A(x_1+x_2)-b\|_{P} = \|Ax_1-b\|_{P}, \ \ \ 
		\|x_1+x_2\|_{Q} = \|x_1\|_{Q} .
	\end{equation*}
	The first property immediately follows.
	
	To prove the second property, suppose $x$ is a solution to \cref{GLS}. Then it is a solution to the problem $\min_{x}\|Ax-b\|_{P}$. Taking the gradient of it leads to $A^{\top}P(Ax-b)=\mathbf{0}$. Now $\calP_{\calR(G)}x$ is a solution to \cref{GLS}. Note that 
	\begin{equation}
		x^{\top}Gz = (\calP_{\calR(G)}x)^{\top}G(G^{\dag}Gz) = (\calP_{\calR(G)}x)^{\top}G(\calP_{\calR(G)}z) ,
	\end{equation}
	and $(\calR(G),\langle\cdot,\cdot \rangle_{G})$ is a Hilbert space with inner product $\langle x, x' \rangle_{G}:=x^{\top}Gx'$. We only need to prove $\calP_{\calR(G)}x \perp_{G}\calP_{\calR(G)}(\mathcal{N}(A^{\top}PA))$, where $\perp_{G}$ is the orthogonal relation in $(\calR(G),\langle\cdot,\cdot \rangle_{G})$. Since $\calP_{\calR(G)}(\mathcal{N}(A^{\top}PA))$ is a closed subspace of $(\calR(G),\langle\cdot,\cdot \rangle_{G})$, we have the decomposition $\calP_{\calR(G)}x=\bar{x}_1+\bar{x}_2$ such that $\bar{x}_{1}\in\calP_{\calR(G)}(\mathcal{N}(A^{\top}PA))$ and $\bar{x}_{2} \perp_{G} \calP_{\calR(G)}(\mathcal{N}(A^{\top}PA))$. We only need to prove $\bar{x}_1= \mathbf{0}$. First we prove $\calP_{\calR(G)}(\mathcal{N}(A^{\top}PA))\subseteq \mathcal{N}(A^{\top}PA)$. To see it, for any $z\in\mathbb{R}^{n}$ we use the decomposition $z=z_1+z_2$ such that $z_{1}\in\calN(G)$ and $z_{2}\in\calN(G)^{\perp}=\calR(G)$, which indicates $\calP_{\calR(G)}z=z_2$. Thus, if $z\in\mathcal{N}(A^{\top}PA)$, then $A^{\top}PAz_2=A^{\top}PAz-A^{\top}PAz_1=A^{\top}PAz=0$ since $z_{1}\in\calN(G)\subseteq \calN(A^{\top}PA)$.
	
	Notice that $\bar{x}_{1}\in \mathcal{N}(A^{\top}PA)$, which leads to $\|A(\bar{x}_1+\bar{x}_2)-b\|_{P} = \|A\bar{x}_2-b\|_{P}$. This indicates that $\bar{x}_2$ is a solution to $\min_{x}\|Ax-b\|_{P}$. Since 
	\[\bar{x}_{2}^{\top}Q\bar{x}_{1} =  \bar{x}_{2}^{\top}(A^{\top}PA+Q)\bar{x}_{1}=\bar{x}_{2}^{\top}G\bar{x}_{1}=0, \]
	we have
	\begin{equation*}
		\|\calP_{\calR(G)}x\|_{Q}^2 = \|\bar{x}_{1}\|_{Q}^2 + \bar{x}_{2}^{\top}Q\bar{x}_{1} + \|\bar{x}_{2}\|_{Q}^2
		= \|\bar{x}_{1}\|_{Q}^2 + \|\bar{x}_{2}\|_{Q}^2 \geq \|\bar{x}_{2}\|_{Q}^2 .
	\end{equation*}
	Specifically, if the second relation of \cref{GLS_criterion} is not satisfied, which means $\bar{x}_1\neq \mathbf{0}$, it must hold $\|\bar{x}_{1}\|_{Q}>0$, which can be proved as follows. If $\|\bar{x}_{1}\|_{Q}=0$, then $\bar{x}_1\in\calN(Q)$. Combining with $\bar{x}_1\in \calP_{\calR(G)}(\mathcal{N}(A^{\top}PA))\subseteq \mathcal{N}(A^{\top}PA)$, it must hold $\bar{x}_1\in\calN(Q)\cap\mathcal{N}(A^{\top}PA)=\calN(G)$. Using the relation $\bar{x}_1\in\calR(G)$ and $\calN(G)\cap \calR(G)=\{\mathbf{0}\}$, we obtain $\bar{x}_1=\mathbf{0}$. Therefore, if $\bar{x}_1\neq \mathbf{0}$, then $\|\bar{x}_{2}\|_{Q}<\|\calP_{\calR(G)}x\|_{Q}=\|x\|_{Q}$, contradicting with that $x$ is a solution. 

	Now we prove that \cref{GLS_criterion} is a sufficient condition. Since \cref{GLS} has at least one solution in $\calR(G)$, by the first property, we only need to prove that there is only one solution in $\calR(G)$ that satisfies \cref{GLS_criterion}. The existence has already been proved. To see the uniqueness, suppose $x_1$ and $x_2$ are such two solutions. Then is must hold that $x_1=x_2+z$ with $z\in\calN(A^{\top}PA)$. From $x_1, x_2\in\calR(G)$ and $x_1, x_2\perp_{G}\calP_{\calR(G)}(\mathcal{N}(A^{\top}PA))$ we obtain $z\in\calR(G)$ and $z\perp_{G}\calP_{\calR(M)}(\mathcal{N}(A^{\top}PA))$. Combining with $z\in\calN(A^{\top}PA)$ leads to $z\in\calP_{\calR(G)}(\mathcal{N}(A^{\top}PA))$. Therefore, we have $z\perp_{G}z$, leading to $z=\mathbf{0}$. This proves the uniqueness of the solution in $\calR(G)$ that satisfies \cref{GLS_criterion}.
\end{proof}

From \Cref{thm:GLS_solv} and its proof, we have the following result.
\begin{corollary}
	There exists a unique solution of \cref{GLS} in $\calR(G)$, which is the minimum 2-norm solution of \cref{GLS}. Denoting this solution by $x^{\dag}$, then the set of all the solutions of \cref{GLS} is $x^{\dag}+\calN(G)$.
\end{corollary}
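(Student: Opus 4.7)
The plan is to read off the corollary directly from \Cref{thm:GLS_solv} and the orthogonal decomposition $\mathbb{R}^{n}=\calR(G)\oplus\calN(G)$, without introducing any new machinery. The uniqueness of a solution in $\calR(G)$ has, in fact, already been established inside the proof of \Cref{thm:GLS_solv} when verifying sufficiency of \cref{GLS_criterion}; so I would first invoke that. Existence in $\calR(G)$ follows from property (1) of \Cref{thm:GLS_solv}: the GLS problem has at least one solution (e.g. an element of least $P$-residual with least $Q$-seminorm exists by a standard compactness/closed-range argument applied to $\min\|Ax-b\|_{P}$ followed by $\min\|x\|_{Q}$ over that affine set), and projecting any such solution by $\calP_{\calR(G)}$ yields a solution lying in $\calR(G)$. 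Denote the resulting unique element by $x^{\dag}$.

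Next I would show that $x^{\dag}$ is the minimum 2-norm solution. Let $y$ be any solution of \cref{GLS}. By property (1) of \Cref{thm:GLS_solv}, $\calP_{\calR(G)}y$ is also a solution in $\calR(G)$, hence by the uniqueness just proved $\calP_{\calR(G)}y=x^{\dag}$. Writing $y=x^{\dag}+\calP_{\calN(G)}y$ with the two summands Euclidean-orthogonal (since $\calR(G)\perp\calN(G)$ as $G$ is symmetric), the Pythagorean identity gives
\begin{equation*}
	\|y\|_{2}^{2} = \|x^{\dag}\|_{2}^{2} + \|\calP_{\calN(G)}y\|_{2}^{2} \geq \|x^{\dag}\|_{2}^{2},
\end{equation*}
with equality if and only if $\calP_{\calN(G)}y=\mathbf{0}$, i.e. $y=x^{\dag}$. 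Hence $x^{\dag}$ has strictly smaller 2-norm than any other solution, establishing that it is the unique minimum 2-norm solution.

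Finally, the description of the full solution set is immediate. The converse half of property (1) of \Cref{thm:GLS_solv} gives $x^{\dag}+\calN(G)\subseteq\{\text{solutions}\}$. In the opposite direction, any solution $y$ satisfies $y=x^{\dag}+(y-x^{\dag})$ with $y-x^{\dag}=\calP_{\calN(G)}y\in\calN(G)$ by the argument of the previous paragraph. Thus the solution set equals $x^{\dag}+\calN(G)$.

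I do not anticipate a genuine obstacle here: every ingredient is already in place from \Cref{thm:GLS_solv} and its proof. The only point requiring a little care is making sure that the existence of \emph{some} solution of \cref{GLS} (before projecting onto $\calR(G)$) is justified; this is standard but should be mentioned, either by appealing to finite dimensionality or by noting that \cref{GLS_criterion} is consistent, which has implicitly been shown during the sufficiency argument of \Cref{thm:GLS_solv}.
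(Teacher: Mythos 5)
Your proposal is correct and follows essentially the same route the paper intends: the paper offers no separate argument, presenting the corollary as an immediate consequence of \Cref{thm:GLS_solv} and its proof, exactly as you do via property (1), the uniqueness of the solution in $\calR(G)$ established in the sufficiency argument, and the Euclidean orthogonality $\calR(G)\perp\calN(G)$ giving the Pythagorean comparison of 2-norms and the description of the solution set. Your explicit remark on the existence of some solution (attainment of the minimum of the convex quadratic $\|x\|_{Q}^{2}$ over the affine set of $P$-residual minimizers) is a small addition the paper leaves implicit, but it does not change the approach.
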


Therefore, in order to solve \cref{GLS}, a key step is to seek the solution $x^{\dag}$ in $\calR(G)$. To investigate the property of $x^{\dag}$, we introduce the following linear operator:
\begin{equation}\label{Amap}
	\calA: (\calR(G),\langle\cdot,\cdot\rangle_{G}) \rightarrow (\calR(P),\langle\cdot,\cdot\rangle_{P}), \ \ \ 
	v \mapsto \calP_{\calR(P)}Av ,
\end{equation}
where $v$ and $Av$ are column vectors under the canonical bases of $\mathbb{R}^{n}$ and $\mathbb{R}^{m}$. It is obvious that $\calA$ is a bounded linear operator. Let 
\[\calA^{*}: (\calR(P),\langle\cdot,\cdot\rangle_{P}) \rightarrow (\calR(G),\langle\cdot,\cdot\rangle_{G}), \ \ \ 
u \mapsto \calA^{*}u \]
be the adjoint operator of $\calA$, which is defined by the relation $\langle \calA v, u \rangle_{P} = \langle \calA^{*} u, v \rangle_{G}$ for any $v\in(\calR(G),\langle\cdot,\cdot\rangle_{G})$ and $(\calR(P),\langle\cdot,\cdot\rangle_{P})$. The following result describes the effect of $\calA$ on a vector under the canonical bases.

\begin{lemma}\label{lem:adj}
	  Under the canonical bases of $\mathbb{R}^{n}$ and $\mathbb{R}^{m}$, for any $u\in(\calR(P),\langle\cdot,\cdot\rangle_{P})$, it holds
	  \begin{equation}
		\calA^{*}u = G^{\dag}A^{\top}Pu .
	  \end{equation}
\end{lemma}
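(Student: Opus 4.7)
The plan is to verify the candidate $w := G^{\dag}A^{\top}Pu$ by checking two things: first, that $w$ lies in $\calR(G)$ (so that it is an admissible element of the codomain of $\calA^{*}$), and second, that $w$ satisfies the defining relation $\langle \calA v, u\rangle_{P} = \langle w, v\rangle_{G}$ for every $v \in \calR(G)$. Uniqueness of the adjoint then forces $\calA^{*}u = w$.

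First I would unfold the left-hand side. By definition of $\calA$ and the $P$-inner product,
\[
\langle \calA v, u\rangle_{P} = (\calP_{\calR(P)}Av)^{\top}Pu.
\]
Since $P$ is symmetric, $\calN(P) = \calR(P)^{\perp}$ in the Euclidean sense, so the decomposition $Av = \calP_{\calR(P)}Av + \calP_{\calN(P)}Av$ together with $P\calP_{\calN(P)}Av = \mathbf{0}$ collapses the projection and gives $\langle \calA v, u\rangle_{P} = v^{\top}A^{\top}Pu$. Next I would unfold the right-hand side using the symmetry of $G$ (and hence of $G^{\dag}$):
\[
\langle w, v\rangle_{G} = (G^{\dag}A^{\top}Pu)^{\top}Gv = u^{\top}PA\,(G^{\dag}G)\,v.
\]
The key identity here is that $G^{\dag}G$ is the Euclidean orthogonal projector onto $\calR(G^{\top}) = \calR(G)$, so $G^{\dag}Gv = v$ for every $v \in \calR(G)$. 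Hence $\langle w, v\rangle_{G} = u^{\top}PAv = v^{\top}A^{\top}Pu$, matching the left-hand side.

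Finally, to confirm $w \in \calR(G)$, I would invoke $\calR(G^{\dag}) = \calR(G^{\top}) = \calR(G)$, which is immediate from the spectral decomposition of the symmetric positive semidefinite matrix $G$; thus $w = G^{\dag}A^{\top}Pu$ automatically lands in $\calR(G)$. Combining the three ingredients and appealing to uniqueness of the adjoint yields the claimed formula.

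The only mildly subtle point, and the one I would highlight, is the symmetric roles of the two projections: on the $P$-side one uses $Pu \in \calR(P)$ (via symmetry of $P$) to suppress $\calP_{\calR(P)}$, and on the $G$-side one uses $v \in \calR(G)$ to suppress $G^{\dag}G$. Once both projections are peeled off, the identity reduces to $v^{\top}A^{\top}Pu = v^{\top}A^{\top}Pu$, and the rest is bookkeeping.
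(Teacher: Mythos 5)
Your proposal is correct and follows essentially the same route as the paper: both unfold the defining relation $\langle \calA v,u\rangle_{P}=\langle \calA^{*}u,v\rangle_{G}$, kill the projector $\calP_{\calR(P)}$ using symmetry of $P$, and exploit that $G^{\dag}G$ acts as the identity on $\calR(G)$ together with $\calR(G^{\dag})=\calR(G)$. The only difference is organizational—you verify the candidate $G^{\dag}A^{\top}Pu$ and invoke uniqueness of the adjoint, whereas the paper solves the relation for $\calA^{*}u$ by applying $G^{\dag}$—which is the same computation run in the opposite direction.
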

\begin{proof}
	Under the canonical bases, it holds
	\begin{equation*}
		\langle \calA v, u \rangle_{P} = \langle \calA^{*} u, v \rangle_{G} \  \Leftrightarrow \ 
		(\calP_{\calR(P)}Av)^{\top}Pu = v^{\top}G(\calA^{*} u)  .
	\end{equation*}
	Since $v\in\calR(G)$, we have $\calP_{\calR(G)}v=v$ and 
	\[(\calP_{\calR(P)}Av)^{\top}P=(\calP_{\calR(P)}A\calP_{\calR(G)}v)^{\top}P
	= v^{\top}\calP_{\calR(G)}A^{\top}\calP_{\calR(P)}P = v^{\top}\calP_{\calR(G)}A^{\top}P. \]
	Thus, we have 
	\begin{equation*}
		v^{\top} \calP_{\calR(G)}A^{\top}Pu  = v^{\top}G(\calA^{*} u) \  \Leftrightarrow \  
		v^{\top} (\calP_{\calR(G)}A^{\top}Pu-G(\calA^{*} u)) =  0
	\end{equation*}
	for any $v\in\calR(G)$. Noticing that $\calP_{\calR(G)}A^{\top}Pu-G(\calA^{*} u)\in\calR(G)$, it follows that $\calP_{\calR(G)}A^{\top}Pu=G(\calA^{*} u)$. This equality can also be written as
	\begin{equation*}
		GG^{\dag}A^{\top}Pu=G(\calA^{*} u) \ \Rightarrow \ 
		G^{\dag}GG^{\dag}A^{\top}Pu=G^{\dag}G(\calA^{*} u) \  \Leftrightarrow  \ 
		G^{\dag}A^{\top}Pu = \calP_{\calR(G)}(\calA^{*} u) . 
	\end{equation*}
	Since $\calA^{*} u\in\calR(G)$, we immediately obtain $\calA^{*} u=G^{\dag}A^{\top}Pu$.
\end{proof}

Now we can reformulate the minimum 2-norm solution of \cref{GLS} as the solution of the following equivalent operator-type LS problem.

\begin{theorem}\label{thm:opLS_solv}
	Let $\calX:=(\calR(G),\langle\cdot,\cdot\rangle_{G})$ and $\calY:=(\calR(P),\langle\cdot,\cdot\rangle_{P})$.
	The minimum $\|\cdot\|_{\calX}$-norm solution of the LS problem
	\begin{equation}\label{operator_ls}
		\min_{v\in\calX} \|\calA v - \calP_{\calR(P)}b\|_{\calY}
	\end{equation}
	is the unique solution of \cref{GLS} in $\calR(G)$.
\end{theorem}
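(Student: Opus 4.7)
The plan is to apply \Cref{LS_criterion} to the operator LS problem \cref{operator_ls} with $\calX=(\calR(G),\langle\cdot,\cdot\rangle_{G})$ and $\calY=(\calR(P),\langle\cdot,\cdot\rangle_{P})$, and then to match the resulting characterization term by term against the criterion of \Cref{thm:GLS_solv}. Since $\calX$ and $\calY$ are finite dimensional, $\calR(\calA)$ is closed, so a least-squares solution always exists, and the best-approximate solution $v^{\dag}$ of \cref{operator_ls} is uniquely characterized by the normal equation $\calA^{*}\calA v = \calA^{*}\calP_{\calR(P)}b$ together with $v\in\calN(\calA)^{\perp_{G}}$, where $\perp_{G}$ denotes orthogonality in $\calX$.

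Using \Cref{lem:adj} together with $P\calP_{\calR(P)}=P$ (which follows from $\calR(P)=\calN(P)^{\perp}$ because $P$ is symmetric), the normal equation simplifies to
\begin{equation*}
G^{\dag}A^{\top}PAv = G^{\dag}A^{\top}Pb .
\end{equation*}
The key observation is that $A^{\top}PAv-A^{\top}Pb$ lies in $\calR(A^{\top}PA)\subseteq\calR(G)$: the first inclusion follows from the factorization $A^{\top}PA=(P^{1/2}A)^{\top}(P^{1/2}A)$, which gives $\calR(A^{\top}PA)=\calR(A^{\top}P^{1/2})\ni A^{\top}Pb$; the second inclusion uses $\calN(G)=\calN(A^{\top}PA)\cap\calN(Q)\subseteq\calN(A^{\top}PA)$, so that $\calR(A^{\top}PA)\subseteq\calR(G)$ after taking orthogonal complements. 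Because $\calN(G^{\dag})=\calR(G)^{\perp}$, the restriction of $G^{\dag}$ to $\calR(G)$ is injective, and the normal equation is therefore equivalent to $A^{\top}P(Av-b)=\mathbf{0}$, which is exactly the first condition in \cref{GLS_criterion}.

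Next I will identify $\calN(\calA)^{\perp_{G}}$. From $\calA v=\calP_{\calR(P)}Av=\mathbf{0}$ one gets $Av\in\calR(P)^{\perp}=\calN(P)$, so $PAv=\mathbf{0}$ and hence $A^{\top}PAv=\mathbf{0}$; the converse is immediate. Thus $\calN(\calA)=\calR(G)\cap\calN(A^{\top}PA)$, and the condition $v\in\calN(\calA)^{\perp_{G}}$ reads $v^{\top}Gz=0$ for every $z\in\calR(G)\cap\calN(A^{\top}PA)$. To reconcile this with the second condition of \cref{GLS_criterion}, which ranges over all $z\in\calN(A^{\top}PA)$, I will use the decomposition $z=\calP_{\calR(G)}z+\calP_{\calN(G)}z$: the inclusion $\calN(G)\subseteq\calN(A^{\top}PA)$ ensures $\calP_{\calR(G)}z\in\calR(G)\cap\calN(A^{\top}PA)$, while $G\calP_{\calN(G)}z=\mathbf{0}$ makes the $\calN(G)$-component invisible to the bilinear form $v^{\top}Gz$ whenever $v\in\calR(G)$; this gives equivalence of the two orthogonality requirements.

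Combining the two equivalences, a vector $v\in\calR(G)$ is the best-approximate solution of \cref{operator_ls} if and only if it satisfies both conditions of \cref{GLS_criterion}, which by \Cref{thm:GLS_solv} and its corollary is precisely the unique solution of \cref{GLS} lying in $\calR(G)$. The main subtlety to handle with care is the interplay of the three relevant orthogonality structures---Euclidean, $G$-orthogonality on $\calR(G)$, and $P$-orthogonality on $\calR(P)$---together with the range inclusions that justify ``cancelling'' $G^{\dag}$ in the normal equation; the remainder is routine unpacking of definitions.
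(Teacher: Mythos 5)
Your proposal is correct and follows essentially the same route as the paper: characterize the best-approximate solution of \cref{operator_ls} via the normal equation plus $\calN(\calA)$-orthogonality, use \Cref{lem:adj} to rewrite $\calA^{*}$, and match the two resulting conditions with \cref{GLS_criterion}. The only cosmetic differences are in the ``cancellation'' of $G^{\dag}$ (you use $\calR(A^{\top}PA)\subseteq\calR(G)$ and injectivity of $G^{\dag}$ on $\calR(G)$, while the paper intersects $\calN(G)$ with $\calN(L_{P}A)^{\perp}$) and in identifying $\calN(\calA)$ directly as $\calR(G)\cap\calN(A^{\top}PA)$ rather than as $\calP_{\calR(G)}(\calN(A^{\top}PA))$, which the paper shows to be the same set.
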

\begin{proof}
	First note that \cref{operator_ls} has a unique $\|\cdot\|_{\calX}$-norm solution. In fact, $v$ is such a solution if and only if 
	\begin{equation}\label{operator_ls_criterion}
		\begin{cases}
			\calA^{*}(\calA v-\calP_{\calR(P)}b)=\mathbf{0},  \\
			v 
			\perp_{\calX} \calN(\calA) , 
		\end{cases}
	\end{equation}
	where $\perp_{\calX}$ is the orthogonal relation in the Hilbert space $\calX$. We only need to prove the equivalence between \cref{GLS_criterion} and \cref{operator_ls_criterion} for $x\in\calR(G)$. For notational consistency, here we uniformly use $v$ instead of $x$. The proof includes the following two steps.

	Step 1: prove $A^{\top}P(Av-b)=\mathbf{0}\Leftrightarrow \calA^{*}(\calA v-\calP_{\calR(P)}b)=\mathbf{0}$ for any $v\in\calR(G)$. By \Cref{lem:adj}, we have 
	\begin{equation*}
		\calA^{*}(\calA v-\calP_{\calR(P)}b)=G^{\dag}A^{\top}P(\calP_{\calR(P)}Av-\calP_{\calR(P)}b) 
		= G^{\dag}A^{\top}P(Av-b) .
	\end{equation*}
	Thus, the ``$\Rightarrow$'' relation is obvious. To get the ``$\Leftarrow$'' relation, suppose $G^{\dag}A^{\top}P(Av-b):=G^{\dag}u=\mathbf{0}$. Let the square root decomposition of $P$ be $P=L_{P}^{\top}L_{P}$. Then 
	\begin{equation*}
		u\in\calN(G^{\dag})=\calN(G) = \calN(A^{\top}PA)\cap \calN(Q) = \calN(L_{P}A)\cap \calN(Q) \subseteq \calN(L_{P}A) ,
	\end{equation*}
	since $G$ is symmetric and $\calN(A^{\top}PA)=\calN(L_{P}A)$. On the other hand, we have $u=A^{\top}P(Av-b)\in\calR((L_{P}A)^{\top})=\calN(L_{P}A)^{\perp}$. Therefore, $u\in \calN(L_{P}A)\cap\calN(L_{P}A)^{\perp}=\{\mathbf{0}\}$, leading to $u=\mathbf{0}$. This proves the ``$\Leftarrow$'' relation.

	Step 2: prove $v\perp_{G}\calP_{\calR(G)}(\mathcal{N}(A^{\top}PA))\Leftrightarrow v \perp_{\calX} \calN(\calA)$ for any $v\in\calR(G)$. Since $v$ is a vector under the canonical basis, we only need to prove $\calP_{\calR(G)}(\mathcal{N}(A^{\top}PA))=\calN(\calA)$. Note that $\calN(\calA)=\{v\in\calR(G):\calP_{\calR(P)}Av=\mathbf{0}\}$. In the proof of \Cref{thm:GLS_solv} we have already shown that $\calP_{\calR(G)}(\mathcal{N}(A^{\top}PA))=\calR(G)\cap\mathcal{N}(A^{\top}PA)$. Thus, for any $v\in\calP_{\calR(G)}(\mathcal{N}(A^{\top}PA))$, we have 
	\begin{equation*}
		A^{\top}PAv=\mathbf{0}\ \Rightarrow \ 
		L_{P}Av = \mathbf{0} \ \Rightarrow  \ 
		\calP_{\calR(P)}Av = P^{\dag}L_{P}^{\top}L_{P}Av = \mathbf{0} .
	\end{equation*}
	This implies that $v\in\calN(\calA)$, and then $\calP_{\calR(G)}(\mathcal{N}(A^{\top}PA))\subseteq \calN(\calA)$. To prove the ``$\supseteq $'' relation, let $v\in\calN(\calA)$. Then $v\in\calR(G)$ and $Av\in\calN(P)$. It follows that $A^{\top}PAv=\mathbf{0}$, leading to $v\in\calR(G)\cap\mathcal{N}(A^{\top}PA)$. This proves $\calP_{\calR(G)}(\mathcal{N}(A^{\top}PA))\supseteq \calN(\calA)$. The whole proof is then completed.
\end{proof}

\Cref{thm:opLS_solv} enables us to study the GLS problem by applying the extensive tools available for LS problems, thereby facilitating both analysis and computations. Building on this theorem, we give a new interpretation of the weighted pseudoinverse in the following part.

\subsection{Weighted pseudoinverse and its properties}
In this part, we come back to the GLS problem of the form \cref{GLS0}. By setting $P=M^{\top}M$ and $Q=L^{\top}L$, the two problems \cref{GLS0} and \cref{GLS} are essentially the same. The following result proposed in \cite{elden1982weighted} gives the structure of a solution of the GLS problem.

\begin{theorem}[\cite{elden1982weighted}]\label{thm:elden}
	For any $A\in\mathbb{R}^{m\times n}$, $M\in\mathbb{R}^{q\times m}$ and $L\in\mathbb{R}^{p\times n}$, the problem
	\begin{equation}\label{GLS1}
		\min_{x\in\mathbb{R}^{n}}\|Lx\|_2 \ \ \ \mathrm{s.t.} \ \ \ 
		\|M(Ax-b)\|_{2}=\min
	\end{equation}
	has the general solution
	\begin{equation}
		x = (I_{n}-(L\calP_{\calN(MA)})^{\dag}L)(MA)^{\dag}Mb + \calP_{\calN(MA)}(I_{n}-(L\calP_{\calN(MA)})^{\dag}L\calP_{\calN(MA)})z,
	\end{equation}
	where $z\in\mathbb{R}^{n}$ is arbitrary. 
\end{theorem}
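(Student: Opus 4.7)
My plan is to exploit the nested structure of the problem. The inner constraint $\|M(Ax-b)\|_2 = \min$ is a standard linear least squares problem for the matrix $MA$ with right-hand side $Mb$, so by the Moore-Penrose theory recalled in \Cref{sec2} its full solution set is the affine subspace $(MA)^{\dag}Mb + \calN(MA)$. Writing each such element as $x = (MA)^{\dag}Mb + y$ with $y \in \calN(MA)$, the GLS problem \cref{GLS1} reduces to the constrained least squares problem of minimizing $\|L(MA)^{\dag}Mb + Ly\|_2$ over $y \in \calN(MA)$.

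To handle the range constraint I would parametrize $y = \calP_{\calN(MA)} z$ with $z \in \mathbb{R}^n$ free, a parametrization that is surjective onto $\calN(MA)$ so that no solution is lost. Setting $C := L\calP_{\calN(MA)}$ and $d := L(MA)^{\dag}Mb$, the outer problem becomes the unconstrained least squares problem $\min_z \|Cz + d\|_2$, whose complete solution set is $z = -C^{\dag}d + (I_n - C^{\dag}C)z'$ with $z' \in \mathbb{R}^n$ arbitrary. Substituting back through $x = (MA)^{\dag}Mb + \calP_{\calN(MA)} z$ yields
\[
x = (MA)^{\dag}Mb - \calP_{\calN(MA)}(L\calP_{\calN(MA)})^{\dag}L(MA)^{\dag}Mb + \calP_{\calN(MA)}\bigl(I_n - (L\calP_{\calN(MA)})^{\dag}L\calP_{\calN(MA)}\bigr)z'.
\]

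The remaining algebraic step is to absorb the extra projector in the middle term. The key observation is the range inclusion $\calR((L\calP_{\calN(MA)})^{\dag}) = \calR((L\calP_{\calN(MA)})^{\top}) = \calR(\calP_{\calN(MA)}L^{\top}) \subseteq \calN(MA)$, which gives $\calP_{\calN(MA)}(L\calP_{\calN(MA)})^{\dag} = (L\calP_{\calN(MA)})^{\dag}$. Applying this identity collapses the first two terms to $(I_n - (L\calP_{\calN(MA)})^{\dag}L)(MA)^{\dag}Mb$, reproducing the expression claimed in the theorem after renaming $z'$ to $z$.

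I do not foresee a genuine obstacle; both sub-problems are plain Moore-Penrose least squares and the only non-routine manipulation is the range-containment identity for $(L\calP_{\calN(MA)})^{\dag}$. The main point requiring vigilance is the bookkeeping of the two surjective parametrizations ($y \in \calN(MA)$ by $\calP_{\calN(MA)}z$, then the free parameter $z'$ in the outer LS solution), so that the final formula enumerates \emph{all} minimizers rather than a subset.
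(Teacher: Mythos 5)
Your proposal is correct. Note that the paper itself offers no proof of this statement: it is quoted verbatim from \cite{elden1982weighted}, so there is no internal argument to compare against. Your derivation --- reduce the inner constraint to the affine solution set $(MA)^{\dag}Mb+\calN(MA)$, parametrize surjectively by $y=\calP_{\calN(MA)}z$, solve the resulting unconstrained problem $\min_z\|Cz+d\|_2$ with $C=L\calP_{\calN(MA)}$, and collapse the extra projector via $\calR(C^{\dag})=\calR(C^{\top})\subseteq\calN(MA)$ --- is complete, and each step (surjectivity of the parametrization, the full LS solution set $-C^{\dag}d+\calN(C)$, the range identity for the pseudoinverse) is justified; this is essentially the classical nested least-squares argument behind Eld\'en's original result, so it serves as a valid self-contained proof of the cited theorem.
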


It is shown that $\calN(MA)\cap\calN(L)=\{\calP_{\calN(MA)}(I_{n}-(L\calP_{\calN(MA)})^{\dag}L\calP_{\calN(MA)})z: z\in\mathbb{R}^{n}\}$, and $x^{\dag}:=(I_{n}-(L\calP_{\calN(MA)})^{\dag}L)(MA)^{\dag}Mb$ is $2$-orthogonal to $\calN(MA)\cap\calN(L)$, thereby it is the minimum 2-norm solution of \cref{GLS1}. To describe the map that takes $b$ to $x^{\dag}$,
define the matrix 
\begin{equation}
	A_{ML}^{\dag}:= (I_{n}-(L\calP_{\calN(MA)})^{\dag}L)(MA)^{\dag}M,
\end{equation}
which is called the $ML$-weighted pseudoinverse of $A$. Note that $x^{\dag}=A_{ML}^{\dag}b$ and note from \Cref{thm:opLS_solv} that $x^{\dag}=\calA^{\dag}\calP_{\calR(P)}b$ for any $b\in\mathbb{R}^{m}$. We immediately have the following result.

\begin{theorem}\label{thm:AML}
	Following the notations in \Cref{thm:elden} and \Cref{thm:opLS_solv}, let $P=M^{\top}M$ and $Q=L^{\top}L$. Then under the canonical bases of $\mathbb{R}^{n}$ and $\mathbb{R}^{m}$, it holds
	\begin{equation}\label{op_wpi}
		A_{ML}^{\dag} = \calA^{\dag}\calP_{\calR(P)} . 
	\end{equation}
\end{theorem}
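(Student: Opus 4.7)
The plan is to chain together three facts that have been assembled just before the statement: Elden's formula (\Cref{thm:elden}), the characterization of the minimum 2-norm solution as the unique GLS solution lying in $\calR(G)$ (the Corollary to \Cref{thm:GLS_solv}), and the operator reformulation (\Cref{thm:opLS_solv}). Fix an arbitrary $b\in\mathbb{R}^{m}$ and let $P=M^{\top}M$, $Q=L^{\top}L$, $G=A^{\top}PA+Q$. I will show the column identity $A_{ML}^{\dag}b=\calA^{\dag}\calP_{\calR(P)}b$, from which the matrix identity \eqref{op_wpi} follows.

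First I would invoke \Cref{thm:elden} and the comment immediately after it, which say that $x^{\dag}:=A_{ML}^{\dag}b$ is the minimum 2-norm solution of the GLS problem \eqref{GLS1}. Since \eqref{GLS1} coincides with \eqref{GLS} for this choice of $P,Q$, the Corollary to \Cref{thm:GLS_solv} identifies $x^{\dag}$ with the unique GLS solution lying in $\calR(G)$. Next I would turn to the right-hand side: because $\calY=(\calR(P),\langle\cdot,\cdot\rangle_{P})$ is finite-dimensional, $\calR(\calA)$ is automatically closed in $\calY$, so $\calD(\calA^{\dag})=\calR(\calA)+\calR(\calA)^{\perp}=\calY$ and in particular $\calP_{\calR(P)}b\in\calD(\calA^{\dag})$. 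Hence $\calA^{\dag}\calP_{\calR(P)}b$ is well defined, and by \Cref{moore_penrose} and the discussion around \eqref{MP_equation} it equals the best-approximate solution in $\calX=(\calR(G),\langle\cdot,\cdot\rangle_{G})$ of the operator LS problem \eqref{operator_ls}. \Cref{thm:opLS_solv} states precisely that this best-approximate solution is the unique GLS solution in $\calR(G)$, i.e.\ it is the same $x^{\dag}$ identified above.

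Combining the two identifications gives $A_{ML}^{\dag}b=x^{\dag}=\calA^{\dag}\calP_{\calR(P)}b$ for every $b\in\mathbb{R}^{m}$, and since both sides are linear maps in $b$ under the canonical bases, the matrix identity \eqref{op_wpi} follows. I do not expect any genuine obstacle here; the whole argument is essentially a diagram chase through the results already proved. The only point that deserves explicit mention is the need to match the two notions of ``minimum norm'' — minimum Euclidean 2-norm over all GLS solutions on the one hand, and minimum $\calX$-norm best-approximate solution of \eqref{operator_ls} on the other — and this matching is exactly what the Corollary to \Cref{thm:GLS_solv} together with \Cref{thm:opLS_solv} accomplishes, since both characterizations single out the same representative in $\calR(G)$.
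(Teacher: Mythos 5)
Your argument is correct and is essentially the paper's own reasoning: the paper treats \eqref{op_wpi} as an immediate consequence of identifying $A_{ML}^{\dag}b$ (via \Cref{thm:elden}) and $\calA^{\dag}\calP_{\calR(P)}b$ (via \Cref{thm:opLS_solv} and the corollary to \Cref{thm:GLS_solv}) with the same unique GLS solution in $\calR(G)$, exactly as you do. Your added remarks on $\calD(\calA^{\dag})=\calY$ in finite dimensions and on linearity in $b$ are harmless elaborations of details the paper leaves implicit.
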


\Cref{thm:AML} establishes a connection between the weighted pseudoinverse and the Moore-Penrose pseudoinverse. Specifically, if $M$ has full column rank, then $\calP_{\calR(P)}=I_{m}$, and $A_{ML}^{\dag}$ is essentially the pseudoinverse $\calA^{\dag}$. Using this new interpretation, we derive the following ``generalized'' Moore-Penrose equations to characterize the weighted pseudoinverse.

\begin{theorem}
	For the weighted pseudoinverse $X:=A_{ML}^{\dag}$, the following generalized Moore-Penrose equations hold:
	\begin{equation}\label{GMP_equation}
		\begin{cases}
			XAX = X, \\
			MAXA=MA, \\
			(M^{\top}MAX)^{\top}=M^{\top}MAX, \\
			(GXAG^{\dag})^{\top}=XA, \\
			XM^{\dag}M=X .
		\end{cases}
	\end{equation}
	Moreover, $A_{ML}^{\dag}$ is the unique solution of the above matrix equations.
\end{theorem}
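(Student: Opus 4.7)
The plan is to leverage \Cref{thm:AML}, which identifies $A_{ML}^{\dag}$ with $\calA^{\dag}\calP_{\calR(P)}$. This reduces both the verification and the uniqueness to properties of the Moore-Penrose pseudoinverse of the linear operator $\calA:(\calR(G),\langle\cdot,\cdot\rangle_G)\to(\calR(P),\langle\cdot,\cdot\rangle_P)$. Two elementary simplifications, both following from $P=M^{\top}M$ (so $\calR(P)=\calR(M^{\top})$ and $\calN(P)=\calN(M)$), will be used throughout: $M\calP_{\calR(P)} = M$ and $M^{\dag}M = \calP_{\calR(P)}$.

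For the verification direction, I would pass each of the four abstract Moore-Penrose equations for $\calA^{\dag}$ through these identities. Equations (1) and (5) are immediate from $\calA^{\dag}\calA\calA^{\dag} = \calA^{\dag}$ and the idempotency of $\calP_{\calR(P)}$. For equation (2), I would use $\calA\calA^{\dag}\calA = \calA$: for $v\in\calR(G)$ this yields $\calP_{\calR(P)}A(XA-I)v = 0$, which forces $A(XA-I)v\in\calN(P)=\calN(M)$ and hence $MA(XA-I)v = 0$; for $v\in\calN(G)$ one uses $\calN(G)\subseteq\calN(MA)$ to get both $MAv = 0$ and $\calP_{\calR(P)}Av = 0$, so that $XAv = 0$ and again $MA(XA-I)v = 0$. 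Equation (3) is the translation of the self-adjointness of $\calA\calA^{\dag}$ in the $P$-inner product, and equation (4) the translation of the self-adjointness of $\calA^{\dag}\calA$ in the $G$-inner product, each written in matrix form using the identities above together with $G\calP_{\calR(G)} = G$ and $G^{\dag}G = \calP_{\calR(G)}$.

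For uniqueness, I would suppose $X$ satisfies all five equations and first extract two structural constraints. Equation (5) gives $X = X\calP_{\calR(P)}$, so $\calN(P)\subseteq\calN(X)$. Combining equations (1) and (4) gives
\[
	\calR(X) = \calR(XAX) \subseteq \calR(XA) = \calR(G^{\dag}A^{\top}X^{\top}G) \subseteq \calR(G).
\]
Hence $X$ is determined by a well-defined linear map $\tilde{X}:(\calR(P),\langle\cdot,\cdot\rangle_P)\to(\calR(G),\langle\cdot,\cdot\rangle_G)$ via $Xb = \tilde{X}(\calP_{\calR(P)}b)$ for every $b\in\mathbb{R}^m$. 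I would then verify that $\tilde{X}$ satisfies the four abstract Moore-Penrose equations for $\calA$: $\calA\tilde{X}\calA = \calA$ and $\tilde{X}\calA\tilde{X} = \tilde{X}$ follow from equations (2) and (1) by inserting $\calA v = \calP_{\calR(P)}Av$ for $v\in\calR(G)$, while the self-adjointness of $\calA\tilde{X}$ and $\tilde{X}\calA$ in the $P$- and $G$-inner products are exactly equations (3) and (4) rewritten. Uniqueness of the abstract Moore-Penrose pseudoinverse then yields $\tilde{X} = \calA^{\dag}$, and hence $X = \calA^{\dag}\calP_{\calR(P)} = A_{ML}^{\dag}$.

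The main obstacle is the bookkeeping in the uniqueness argument, specifically the systematic translation between matrix equations written in canonical bases with the Euclidean inner product and abstract operator equations in the weighted Hilbert spaces. Justifying the insertion and removal of the projectors $\calP_{\calR(G)}$ and $\calP_{\calR(P)}$ at each step, and verifying the two self-adjointness conditions for $\tilde{X}$ in the weighted inner products, requires unfolding $\langle\cdot,\cdot\rangle_G$ and $\langle\cdot,\cdot\rangle_P$ explicitly in terms of $G$ and $P$ and then matching carefully with equations (3) and (4).
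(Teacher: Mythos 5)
Your proposal is correct. The verification half is essentially the paper's argument: both translate the four abstract Moore--Penrose equations \cref{MP_equation} for $\calA^{\dag}$ into matrix form via $A_{ML}^{\dag}=\calA^{\dag}\calP_{\calR(P)}$ (\Cref{thm:AML}), using $M^{\dag}M=\calP_{\calR(P)}$, $\calN(P)=\calN(M)$, and the fact that $\calP_{\calR(P)}A v=\calP_{\calR(P)}A\calP_{\calR(G)}v$; your split of $\mathbb{R}^n$ into $\calR(G)\oplus\calN(G)$ for the second identity is only a cosmetic variation. The uniqueness half, however, is a genuinely different route. The paper first derives $\calP_{\calR(G)}X=X$ from identities (1) and (4) of \cref{GMP_equation} and then runs a self-contained Penrose-style chain of matrix identities $X=(XA)X=\cdots=YAY=Y$ to show any two solutions coincide. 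You instead extract the structural facts $X=X\calP_{\calR(P)}$ and $\calR(X)\subseteq\calR(G)$, lift $X$ to an operator $\tilde{X}:(\calR(P),\langle\cdot,\cdot\rangle_P)\to(\calR(G),\langle\cdot,\cdot\rangle_G)$, check that $\calA\tilde{X}\calA=\calA$, $\tilde{X}\calA\tilde{X}=\tilde{X}$, and that $\calA\tilde{X}$, $\tilde{X}\calA$ are self-adjoint (hence, with idempotency, orthogonal projectors), and then invoke the abstract uniqueness of the operator pseudoinverse stated in \Cref{sec2} to get $\tilde{X}=\calA^{\dag}$, so $X=\calA^{\dag}\calP_{\calR(P)}=A_{ML}^{\dag}$. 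Your route is shorter and more conceptual, and it reuses the operator-theoretic framework the paper already set up; its cost is that it leans on the (standard, but unproved in the paper) relaxed Penrose characterization of $T^{\dag}$, and it requires the careful bookkeeping you flag when unfolding the $G$- and $P$-inner products to match identities (3) and (4) --- those computations do go through, e.g.\ identity (4) rewritten as $\calP_{\calR(G)}(XA)^{\top}G=GXA$ gives exactly $G$-self-adjointness of $\tilde{X}\calA$ on $\calR(G)$. The paper's chain, by contrast, is purely matrix-algebraic and self-contained but longer.
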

\begin{proof}
	Using \cref{op_wpi} and $\calP_{\calR(P)}=\calP_{\calN(P)^{\perp}}=\calP_{\calN(M)^{\perp}}=M^{\dag}M$, the fifth identity immediately follows. We use the identities \cref{MP_equation} to prove the first four identities. Since $\calA^{\dag}\calA\calA^{\dag}=\calA^{\dag}$, for any $u\in\mathbb{R}^{m}$ under the canonical basis, it holds
	\begin{equation*}
		A_{ML}^{\dag}u = \calA^{\dag}\calP_{\calR(P)}u = 
		\calA^{\dag}\calA\calA^{\dag}\calP_{\calR(P)}u = \calA^{\dag}\calP_{\calR(P)}AA_{ML}^{\dag}u
		= A_{ML}^{\dag}AA_{ML}^{\dag}u ,
	\end{equation*}
	which implies the first identity. Notice that $\calP_{\calR(P)}A\calP_{\calN(G)}v=P^{\dag}PA\calP_{\calN(G)}v=\mathbf{0}$ due to $\calN(G)\subseteq\calN(A^{\top}PA)$, implying that $\calP_{\calR(P)}Av=\calP_{\calR(P)}A\calP_{\calR(G)}v$ for any $v\in\mathbb{R}^{n}$. Therefore, it holds 
	\begin{align*}
		\calP_{\calR(P)}Av &=
		\calA \calP_{\calR(G)}v = \calA\calA^{\dag}\calA \calP_{\calR(G)}v \\
		&= \calP_{\calR(P)}A\calA^{\dag}\calP_{\calR(P)}A\calP_{\calR(G)}v 
		= \calP_{\calR(P)}AA_{ML}^{\dag}Av.
	\end{align*}
	This implies $M^{\dag}MA=M^{\dag}MAA_{ML}^{\dag}A$, leading to the second identity.
	For the third identity, use the relation 
	\begin{equation*}
		\langle \calA\calA^{\dag}\calP_{\calR(P)}u, \calP_{\calR(P)}u'\rangle_{P}
		= \langle \calP_{\calR(P)}u, (\calA\calA^{\dag})^{*}\calP_{\calR(P)}u'\rangle_{P}
	\end{equation*}
	for any $u,u'\in\mathbb{R}^{m}$, which is equivalent to
	\begin{align*}
		& \ \ \ \ \ (\calP_{\calR(P)}AA_{ML}^{\dag}u)^{\top}P\calP_{\calR(P)}u'
		= (\calP_{\calR(P)}u)^{\top}P(\calA\calA^{\dag})^{*}\calP_{\calR(P)}u' \\
		& \Leftrightarrow \ u^{\top}(AA_{ML}^{\dag})^{\top}Pu' = u^{\top}P(\calA\calA^{\dag})^{*}\calP_{\calR(P)}u' .
	\end{align*}
	It follows that $P(\calA\calA^{\dag})^{*}\calP_{\calR(P)}u'=(AA_{ML}^{\dag})^{\top}Pu'$ for any $u'\in\mathbb{R}^{m}$. Using the fourth identity of \cref{MP_equation}, which implies $(\calA\calA^{\dag})^{*}=\calA\calA^{\dag}$, we have
	\begin{equation*}
		PAA_{ML}^{\dag}u'=P\calA\calA^{\dag}\calP_{\calR(P)}u'
		= (AA_{ML}^{\dag})^{\top}Pu',
	\end{equation*}
	leading to $ (AA_{ML}^{\dag})^{\top}M^{\top}M=M^{\top}MAA_{ML}^{\dag}$, which is just the third identity. For the fourth identity, use the relation
	\begin{equation*}
		\langle \calA^{\dag}\calA\calP_{\calR(G)}v, \calP_{\calR(G)}v'\rangle_{G}
		= \langle \calP_{\calR(G)}v, (\calA^{\dag}\calA)^{*}\calP_{\calR(G)}v'\rangle_{G}
	\end{equation*}
	for any $v,v'\in\mathbb{R}^{n}$, which is equivalent to
	\begin{align*}
		& \ \ \ \ \ (A_{ML}^{\dag}A\calP_{\calR(G)}v)^{\top}G\calP_{\calR(G)}v'
		= (\calP_{\calR(G)}v)^{\top}G(\calA^{\dag}\calA)^{*}\calP_{\calR(G)}v' \\
		& \Leftrightarrow \ v^{\top}(A_{ML}^{\dag}A)^{\top}Gv' = v^{\top}G(\calA^{\dag}\calA)^{*}\calP_{\calR(G)}v' ,
	\end{align*}
	where we have used $\calP_{\calR(P)}A\calP_{\calR(G)}v=\calP_{\calR(P)}Av$. It follows that $G(\calA^{\dag}\calA)^{*}\calP_{\calR(G)}v'=(A_{ML}^{\dag}A)^{\top}Gv'$ for any $v'\in\mathbb{R}^{n}$. Using the third identity of \cref{MP_equation}, which implies $(\calA^{\dag}\calA)^{*}=\calA^{\dag}\calA$, we have
	\begin{equation*}
		GA_{ML}^{\dag}Av' = G\calA^{\dag}\calA\calP_{\calR(G)}v' = (A_{ML}^{\dag}A)^{\top}Gv',
	\end{equation*}
	leading to $A_{ML}^{\dag}Av'=G^{\dag}GA_{ML}^{\dag}Av'=G^{\dag}(A_{ML}^{\dag}A)^{\top}Gv'$. Thus, it holds $A_{ML}^{\dag}A=G^{\dag}(A_{ML}^{\dag}A)^{\top}G$, which is the fourth identity.

	To show that $X$ is unique, first we prove that if $X$ satisfies the first four identities then $X$ must satisfy
	\begin{equation}\label{GMP_six}
		\calP_{\calR(G)}X=X .
	\end{equation}
	Using the first and fourth identities, we get
	\begin{align*}
		G^{\dag}GX = G^{\dag}GXAX=G^{\dag}G(G^{\dag}A^{\top}X^{\top}G)X
		= G^{\dag}A^{\top}X^{\top}GX = XAX=X .
	\end{align*}
	Suppose $Y$ is another matrix satisfying equations \cref{GMP_equation}, then $Y$ also satisfies \cref{GMP_six}. 
	Note that the fifth identity implies $X=X\calP_{\calR(P)}=XP^{\dag}P$. 
	It follows that
	\begin{align*}
		X &= (XA)X=(GX\calP_{\calR(P)}AG^{\dag})^{\top}X=(\calP_{\calR(P)}AG^{\dag})^{\top}X^{\top}GX \\
		&\overset{(1)}{=} (\calP_{\calR(P)}A\calP_{\calR(G)}YAG^{\dag})^{\top}X^{\top}GX=(G^{\dag}A^{\top}Y^{\top}\calP_{\calR(G)})A^{\top}(X\calP_{\calR(P)})^{\top}GX \\
		&\overset{(2)}{=} YAG^{\dag}A^{\top}X^{\top}GX=YA(XA)X=YAX=YAYAX=(YP^{\dag}P)AYAX \\
		&= YP^{\dag}(PAY)^{\top}AX=YP^{\dag}Y^{\top}A^{\top}PAX=Y(X^{\top}A^{\top}PAYP^{\dag})^{\top} \\
		&= Y(PAXAYP^{\dag})^{\top}=Y(PAYP^{\dag})^{\top}\overset{(3)}{=}Y\calP_{\calR(P)}AY=YAY=Y,
	\end{align*} 
	where for ``$\overset{(1)}{=}$" we use \cref{GMP_six} and $\calP_{\calR(P)}A=\calP_{\calR(P)}AYA$ due to the second identity, for ``$\overset{(2)}{=}$" we use $G^{\dag}A^{\top}Y^{\top}\calP_{\calR(G)}=YAG^{\dag}$ due to the fourth identity, and for ``$\overset{(3)}{=}$" we use $(PAYP^{\dag})^{\top}=\calP_{\calR(P)}AY$ due to the third identity. This proves the uniqueness of $X$.
\end{proof}

In \cite{elden1982weighted}, the author presented four identities similar to the Moore-Penrose equations of the pseudoinverse. The first three of them are identical to the first three listed in \cref{GMP_equation} while the fourth identity is $(L^{\top}LXA)^{\top}=L^{\top}LXA$. However, it remains an open problem whether the weighted pseudoinverse is uniquely determined by these four equations. In contrast, equations \cref{GMP_equation} completely characterize the weighted pseudoinverse. The first four identities correspond to the Moore-Penrose equations of a matrix, while the last identity describes an additional constraint on $A_{ML}^{\dag}$ arising from $M$. Specifically, the fifth identity is trivial when $M$ has full column rank.  

The weighted pseudoinverse satisfies the following limit property.

\begin{theorem}
	Following the notations in \Cref{thm:elden} and \Cref{thm:AML}, let $G=A^{\top}PA+Q$. It holds that 
	\begin{equation}\label{limit1}
		\lim_{\delta \searrow 0} (A^{\top}PA+\delta G)^{\dag}A^{\top}P = A_{ML}^{\dag} .
	\end{equation}
\end{theorem}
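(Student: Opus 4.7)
The plan is to apply the operator-theoretic limit identity \cref{limit0} to the operator $\calA:(\calR(G),\langle\cdot,\cdot\rangle_{G})\rightarrow(\calR(P),\langle\cdot,\cdot\rangle_{P})$ from \cref{Amap}, and then translate the resulting expression into matrix form via \Cref{lem:adj} and \Cref{thm:AML}. For any fixed $b\in\mathbb{R}^{m}$, applying \cref{limit0} (with $T=\calA$) to $\calP_{\calR(P)}b$ and invoking \Cref{thm:AML} gives
\[
A_{ML}^{\dag}b \;=\; \calA^{\dag}\calP_{\calR(P)}b \;=\; \lim_{\delta\searrow 0}(\calA^{*}\calA+\delta I)^{-1}\calA^{*}\calP_{\calR(P)}b,
\]
where $I$ is the identity on $\calX=(\calR(G),\langle\cdot,\cdot\rangle_{G})$. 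It therefore suffices to identify the vector $w_{\delta}:=(\calA^{*}\calA+\delta I)^{-1}\calA^{*}\calP_{\calR(P)}b\in\calR(G)$, expressed in the canonical basis, with $(A^{\top}PA+\delta G)^{\dag}A^{\top}Pb$.

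To do so, I would start from the defining relation $(\calA^{*}\calA+\delta I)w_{\delta}=\calA^{*}\calP_{\calR(P)}b$ with $w_{\delta}\in\calR(G)$. \Cref{lem:adj}, together with $PP^{\dag}P=P$, converts this to $G^{\dag}A^{\top}PAw_{\delta}+\delta w_{\delta}=G^{\dag}A^{\top}Pb$. Left-multiplying by $G$ and observing that both $A^{\top}PAw_{\delta}$ and $A^{\top}Pb$ lie in $\calR(G)$ collapses the $GG^{\dag}$ factors on each side, producing the clean matrix equation
\[
(A^{\top}PA+\delta G)w_{\delta}=A^{\top}Pb,\qquad w_{\delta}\in\calR(G).
\]
The inclusion $A^{\top}PAw_{\delta}\in\calR(A^{\top}PA)\subseteq\calR(G)$ is immediate, while $A^{\top}Pb\in\calR(G)$ follows from the chain $\calR(A^{\top}P)=\calR(A^{\top}M^{\top})=\calR((MA)^{\top}MA)=\calR(A^{\top}PA)\subseteq\calR(G)$, using $P=M^{\top}M$ and the standard identity $\calR(B^{\top})=\calR(B^{\top}B)$.

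For the final step, I would note that when $\delta>0$ the matrix $A^{\top}PA+\delta G$ is symmetric positive semidefinite with null space $\calN(A^{\top}PA)\cap\calN(G)=\calN(G)$, hence range $\calR(G)$. Consequently, $w_{\delta}\in\calR(G)=\calN(A^{\top}PA+\delta G)^{\perp}$ and $A^{\top}Pb\in\calR(G)=\calR(A^{\top}PA+\delta G)$, which together uniquely identify $w_{\delta}$ as the minimum $2$-norm solution $(A^{\top}PA+\delta G)^{\dag}A^{\top}Pb$. Letting $\delta\searrow 0$ and applying the above limit completes the proof of \cref{limit1}. The main subtlety lies in the two range identities that enable the clean collapse upon multiplication by $G$; once these are in place, the argument is a direct canonical-basis translation of the operator-level limit \cref{limit0}.
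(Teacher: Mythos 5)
Your proposal is correct and follows essentially the same route as the paper: both establish, for each $\delta>0$, the identity $(\calA^{*}\calA+\delta I)^{-1}\calA^{*}\calP_{\calR(P)}b=(A^{\top}PA+\delta G)^{\dag}A^{\top}Pb$ via \Cref{lem:adj} together with the key fact $\calN(A^{\top}PA+\delta G)=\calN(G)$ (hence range $\calR(G)$), and then conclude by combining \cref{limit0} with \Cref{thm:AML}. The only cosmetic difference is directional: you pass from the operator normal equation to the matrix equation $(A^{\top}PA+\delta G)w_{\delta}=A^{\top}Pb$ and identify $w_{\delta}$ through the range/null-space characterization of the pseudoinverse (using $A^{\top}Pb\in\calR(G)$), whereas the paper verifies that the candidate $(A^{\top}PA+\delta G)^{\dag}A^{\top}Pu$ satisfies the operator equation by recognizing it as the minimum 2-norm solution of $\min_{x}\|Gx-A^{\top}Pu\|_2$.
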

\begin{proof}
	First we show that for any $\delta>0$, under the canonical bases of $\mathbb{R}^{n}$ and $\mathbb{R}^{m}$, it holds for any $u\in\calR(P)$ that
	\begin{equation}\label{equal1}
		(\calA^{*}\calA+\delta I)^{-1}\calA^{*}u = (A^{\top}PA+\delta G)^{\dag}A^{\top}Pu .
	\end{equation}
	Notice that $\calN(A^{\top}PA+\delta G)=\calN(A^{\top}PA)\cap\calN(G)=\calN(G)$, which leads to that $\calR((A^{\top}PA+\delta G)^{\dag})=\calR(A^{\top}PA+\delta G)=\calR(G)$. Thus, we have $(A^{\top}PA+\delta G)^{\dag}A^{\top}Pu\in(\calR(G), \langle\cdot,\cdot \rangle_{G})$, and
	we only need to prove $(\calA^{*}\calA+\delta I)(A^{\top}PP+\delta G)^{\dag}A^{\top}Pu=\calA^{*}u$. By \Cref{lem:adj}, we have
	\begin{align*}
		(\calA^{*}\calA+\delta I)(A^{\top}PA+\delta G)^{\dag}A^{\top}Pu
		&= (G^{\dag}A^{\top}P\calP_{\calR(P)}A+\delta I_n)(A^{\top}PA+\delta G)^{\dag}A^{\top}Pu \\
		&= (G^{\dag}A^{\top}PA+\delta I_n)(A^{\top}PA+\delta G)^{\dag}A^{\top}Pu =: w, 
	\end{align*}
	and $\calA^{*}u=G^{\dag}A^{\top}Pu$. Therefore, we only need to show that $w$ is the minimum 2-norm solution of $\min_{x}\|Gx-A^{\top}Pu\|_2$, which is equivalent to that $w\in\calN(G)=\calR(G)$ and $G^{\top}Gw=G^{\top}A^{\top}Pu$. Since $\calR(G^{\dag})=\calR(G)$ and $\calR((A^{\top}PA+\delta G)^{\dag})=\calR(G)$, it follows that $w\in\calR(G)$. Also, we have
	\begin{align*}
		G^{\top}Gw 
		&= GG(G^{\dag}A^{\top}PA+\delta I_n)(A^{\top}PA+\delta G)^{\dag}A^{\top}Pu \\
		&= G(A^{\top}PA+\delta G)(A^{\top}PA+\delta G)^{\dag}A^{\top}Pu \\
		&= G\calP_{\calR(A^{\top}PA+\delta G)}A^{\top}Pu \\
		&= G\calP_{\calR(G)}A^{\top}Pu = G^{\top}A^{\top}Pu .
	\end{align*}
	This proves \cref{equal1}. 

	By \cref{equal1}, for any $b\in\mathbb{R}^{m}$, it holds that
	\begin{equation*}
		(\calA^{*}\calA+\delta I)^{-1}\calA^{*}\calP_{\calR(P)}b = (A^{\top}PA+\delta G)^{\dag}A^{\top}P\calP_{\calR(P)}b
		= (A^{\top}PA+\delta G)^{\dag}A^{\top}Pb, 
	\end{equation*}
	which indicates that $(\calA^{*}\calA+\delta I)^{-1}\calA^{*}\calP_{\calR(P)}=(A^{\top}PA+\delta G)^{\dag}A^{\top}P$ under the canonical bases. Using \Cref{thm:AML} and \cref{limit0}, we obtain
	\begin{align*}
		\lim_{\delta \searrow 0} (A^{\top}PA+\delta G)^{\dag}A^{\top}P 
		= \left(\lim_{\delta \searrow 0} (\calA^{*}\calA+\delta I)^{-1}\calA^{*}\right)\calP_{\calR(P)}
		= \calA^{\dag}\calP_{\calR(P)} = A_{ML}^{\dag}  .
	\end{align*}
	This completes the proof.
\end{proof}

In \cite[Theorem 2.4]{elden1982weighted}, the author gave a similar limit property:
\begin{equation}\label{limit2}
	\lim_{\delta \searrow 0} (A^{\top}PA+\delta Q)^{\dag}A^{\top}P = A_{ML}^{\dag}, 
\end{equation}
but did not include a proof. Notice that $A^{\top}PA+\delta G=(1+\delta)\left(A^{\top}PA+\frac{\delta}{1+\delta}Q\right)$. Therefore, \cref{limit1} and \cref{limit2} are equivalent. 

In many scenarios, people are more interested in \cref{GLS1} with $M=I_{m}$. In this case, we have $A_{IL}=\calA^{\dag}$, which means that the $I,L$-weighted pseudoinverse of $A$ is nothing but the pseudoinverse of the linear operator $\calA$. Moreover, it has a direct relation with the GSVD of the matrix pair $\{A, L\}$. Let us review the GSVD proposed in \cite{Paige1981}.

\begin{theorem}[GSVD]\label{thm_gsvd}
	Let $A\in\mathbb{R}^{m\times n}$ and $L\in\mathbb{R}^{p\times n}$. There exist orthogonal matrices $U_{A}\in \mathbb{R}^{m\times m}$, $U_{L}\in \mathbb{R}^{p\times p}$ and invertible matrix $X\in\mathbb{R}^{n\times n}$, such that the GSVD of $\{A, L\}$ has the form:
	\begin{subequations}\label{GSVD1}
	\begin{equation}\label{gsvd1}
	  A = U_{A}\Sigma_AX^{-1} , \ \ \  L = U_{L}\Sigma_LX^{-1} ,
  \end{equation}
	  with
	  \begin{equation}\label{gsvd2}
		  \Sigma_A =
		  \bordermatrix*[()]{%
			  C_{A} & \mathbf{0} & m \cr
			  r &   n-r   \cr
		  } \ , \  \ \ 
		  \Sigma_L = \bordermatrix*[()]{%
			  S_{L} & \mathbf{0} & p \cr
			  r &   n-r   \cr
		  } 
	  \end{equation}
	  and
	  \begin{equation}\label{gsvd3}
		  C_{A} =
		  \bordermatrix*[()]{%
			  I_{q_1}  &  &  & q_1 \cr
			  &  C_{q_2}  &  & q_2 \cr
			  &  & \mathbf{0}  & m-q_1-q_2 \cr
			  q_1 & q_2 & q_3
		  } , \ \ \
		  S_{L} =
		  \bordermatrix*[()]{%
			  \mathbf{0}  &  &  & p-r+q_1 \cr
			  &  S_{q_2}  &  & q_2 \cr
			  &  & I_{q_3}  & q_3 \cr
			  q_1 & q_2 & q_3
		  },
	  \end{equation}	
	  \end{subequations}
	  where $q_1+q_2+q_3=r=\mathrm{rank}((A^{\top},L^{\top})^{\top})$ and $C_{A}^{\top}C_A+S_{L}^{\top}S_L=I_{r}$. 
\end{theorem}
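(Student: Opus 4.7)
The plan is to prove the existence of the GSVD by combining a null-space reduction of the stacked matrix with the CS decomposition of a pair of matrices sharing a common column dimension. This follows the approach of \cite{Paige1981}.

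First I would form $N := \begin{pmatrix} A \\ L \end{pmatrix} \in \mathbb{R}^{(m+p)\times n}$ and set $r := \mathrm{rank}(N)$. Pick an invertible $X_0 = (Y_1, Y_2) \in \mathbb{R}^{n \times n}$ whose trailing block $Y_2 \in \mathbb{R}^{n \times (n-r)}$ has columns spanning $\calN(N)$. Then $N X_0 = (\tilde{N}, \mathbf{0})$, where $\tilde{N} := N Y_1 \in \mathbb{R}^{(m+p)\times r}$ has full column rank. Partition $\tilde{N} = \begin{pmatrix} \tilde{A} \\ \tilde{L} \end{pmatrix}$ and compute its thin QR factorization $\tilde{N} = QR$ with $Q \in \mathbb{R}^{(m+p)\times r}$ satisfying $Q^{\top}Q = I_r$ and $R \in \mathbb{R}^{r \times r}$ invertible; then partition $Q = \begin{pmatrix} Q_A \\ Q_L \end{pmatrix}$ conformally with $N$. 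The orthonormality of $Q$ yields the identity $Q_A^{\top}Q_A + Q_L^{\top}Q_L = I_r$, which is precisely the hypothesis needed for the CS decomposition.

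The main step is to apply the CS decomposition to $(Q_A, Q_L)$: there exist orthogonal $U_A \in \mathbb{R}^{m\times m}$, $U_L \in \mathbb{R}^{p\times p}$, and $V \in \mathbb{R}^{r\times r}$ such that $U_A^{\top}Q_A V$ and $U_L^{\top}Q_L V$ take exactly the block forms of $C_A$ and $S_L$ in \cref{gsvd3}, with $q_1, q_2, q_3$ equal to the multiplicities of the eigenvalues of $Q_A^{\top}Q_A$ that are equal to one, strictly in $(0,1)$, and equal to zero, respectively. These multiplicities satisfy $q_1+q_2+q_3 = r$, and the relation $C_A^{\top}C_A + S_L^{\top}S_L = I_r$ follows automatically. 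Setting
\[
X := X_0 \begin{pmatrix} R^{-1} V & \mathbf{0} \\ \mathbf{0} & I_{n-r} \end{pmatrix},
\]
which is invertible, a direct block multiplication using $A Y_2 = L Y_2 = \mathbf{0}$ and $A Y_1 R^{-1} V = Q_A V = U_A C_A$ (and likewise for $L$) gives $AX = U_A \Sigma_A$ and $LX = U_L \Sigma_L$, which rearranges to \cref{gsvd1}.

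The main obstacle is the CS decomposition itself, but it is standard: because $Q_A^{\top}Q_A$ and $I_r - Q_A^{\top}Q_A = Q_L^{\top}Q_L$ commute, there is an orthogonal $V$ simultaneously diagonalizing them, and sorting the eigenvalues of $Q_A^{\top}Q_A$ into the three groups $\{1\}$, $(0,1)$, $\{0\}$ fixes the block sizes $q_1, q_2, q_3$ together with the diagonal entries of $C_{q_2}$ and $S_{q_2}$. The unitaries $U_A$ and $U_L$ are then extracted from the compact SVDs of $Q_A V$ and $Q_L V$ restricted to each block, with some care in the degenerate blocks where one factor is already orthonormal or already zero. Everything else in the argument is dimension counting to confirm that the trailing zero rows of $C_A$ (of size $m - q_1 - q_2$) and the leading zero rows of $S_L$ (of size $p - q_2 - q_3$) are nonnegative, which holds because $q_1 + q_2 = \mathrm{rank}(Q_A) \leq m$ and $q_2 + q_3 = \mathrm{rank}(Q_L) \leq p$.
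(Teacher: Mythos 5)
Your construction is correct: the null-space reduction of the stacked matrix $\begin{pmatrix} A^{\top} & L^{\top}\end{pmatrix}^{\top}$, followed by a thin QR factorization and the CS decomposition of $(Q_A,Q_L)$, together with the rank bookkeeping $q_1+q_2=\mathrm{rank}(Q_A)\le m$ and $q_2+q_3=\mathrm{rank}(Q_L)\le p$, is exactly the classical argument of \cite{Paige1981}. The paper itself gives no proof of this theorem---it simply quotes the result from that reference---so your write-up supplies the standard proof the citation points to, and it is sound.
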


In \cite{elden1982weighted}, the author shows that if $\calN(A)\cap\calN(L)=\{\mathbf{0}\}$, then $A_{IL}^{\dag}=X\Sigma_{A}^{\dag}U_{A}^{\top}$. In the following result, we give a similar expression for $A_{IL}^{\dag}$, which is applicable regardless of whether $\calN(A)\cap\calN(L)=\{\mathbf{0}\}$ or not.

\begin{theorem}\label{thm:wp_gsvd}
	For any two matrices $A\in\mathbb{R}^{m\times n}$ and $L\in\mathbb{R}^{p\times n}$ with the GSVD as described in \Cref{thm_gsvd}, let $G=A^{\top}A+L^{\top}L$. Then
	\begin{equation}\label{equa2}
		A_{IL}^{\dag} = \calP_{\calR(G)}X\Sigma_{A}^{\dag}U_{A}^{\top} .
	\end{equation}
\end{theorem}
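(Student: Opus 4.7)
The plan is to reduce the problem to showing that $\tilde{x} := X\Sigma_{A}^{\dag}U_{A}^{\top}b$ is \emph{some} solution (not necessarily of minimum $2$-norm) of the GLS problem \cref{GLS1} with $M=I_{m}$. Once this is established, part~(1) of \Cref{thm:GLS_solv} together with its corollary imply that $\calP_{\calR(G)}\tilde{x}$ is the unique minimum $2$-norm solution, which is exactly $A_{IL}^{\dag}b$ by definition. Since $b\in\mathbb{R}^{m}$ is arbitrary, this yields \cref{equa2}.

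To show that $\tilde{x}$ solves \cref{GLS1}, I would verify the two conditions of \cref{GLS_criterion} with $P=I_{m}$ and $Q=L^{\top}L$. For the first condition $A^{\top}(A\tilde{x}-b)=\mathbf{0}$, substituting the GSVD yields $A\tilde{x}=U_{A}\Sigma_{A}\Sigma_{A}^{\dag}U_{A}^{\top}b$. From the block form \cref{gsvd2}--\cref{gsvd3}, $\Sigma_{A}\Sigma_{A}^{\dag}$ is the orthogonal projection onto $\mathrm{span}\{e_{1},\dots,e_{q_{1}+q_{2}}\}\subset\mathbb{R}^{m}$, so $A\tilde{x}-b$ lies in its orthogonal complement in the $U_{A}$-frame and is therefore annihilated by $\Sigma_{A}^{\top}$. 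This gives the first identity immediately.

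For the second condition $\tilde{x}^{\top}Gz=0$ for every $z\in\calN(A^{\top}A)=\calN(A)$, I would use $C_{A}^{\top}C_{A}+S_{L}^{\top}S_{L}=I_{r}$ to write $G=X^{-\top}\mathrm{diag}(I_{r},\mathbf{0}_{n-r})X^{-1}$, and identify $\calN(A)=X\cdot\mathrm{span}\{e_{q_{1}+q_{2}+1},\dots,e_{n}\}$ from the block structure of $\Sigma_{A}$. Writing $\tilde{x}=X\hat{x}$ with $\hat{x}=\Sigma_{A}^{\dag}U_{A}^{\top}b$, the block form of $\Sigma_{A}^{\dag}$ forces $\hat{x}$ to be supported on coordinates $1,\dots,q_{1}+q_{2}$, while every $z=X\hat{z}\in\calN(A)$ has $\hat{z}_{i}=0$ for $i\le q_{1}+q_{2}$. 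Consequently $\tilde{x}^{\top}Gz=\hat{x}^{\top}\mathrm{diag}(I_{r},\mathbf{0})\hat{z}=0$ by disjointness of supports inside $\{1,\dots,r\}$.

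The only real obstacle is careful bookkeeping of the GSVD block indices $(q_{1},q_{2},q_{3})$ and not confusing $\calR(\Sigma_{A})\subset\mathbb{R}^{m}$ (controlling the first condition) with $\calN(\Sigma_{A})\subset\mathbb{R}^{n}$ (controlling the second). Once these are tracked correctly, both halves of \cref{GLS_criterion} reduce to one-line calculations, and no tools beyond \Cref{thm:GLS_solv} and the explicit block GSVD of $\{A,L\}$ are needed.
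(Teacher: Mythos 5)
Your proposal is correct and follows essentially the same route as the paper: verify the two conditions of \cref{GLS_criterion} for $X\Sigma_{A}^{\dag}U_{A}^{\top}b$ using the block structure of the GSVD (the identity $\Sigma_{A}^{\top}\Sigma_{A}\Sigma_{A}^{\dag}=\Sigma_{A}^{\top}$ for the first condition, and the relations $X^{\top}GX=\diag(I_{r},\mathbf{0})$, $\calN(A)=X\,\mathrm{span}\{e_{q_1+q_2+1},\dots,e_n\}$ with the support argument for the second), then invoke part~(1) of \Cref{thm:GLS_solv} and its corollary to pass to $\calP_{\calR(G)}X\Sigma_{A}^{\dag}U_{A}^{\top}b$ as the minimum $2$-norm solution. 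Your coordinate/support bookkeeping in the $X$-frame is just a rephrasing of the paper's partition $X=(X_1\ X_2\ X_3\ X_4)$ and its $G$-orthonormality observation, so no substantive difference.
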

\begin{proof} 
	First we prove that 
	$x:=X\Sigma_{A}^{\dag}U_{A}^{\top}b$ is a solution of \cref{GLS1}. We only need to verify that the two conditions in \cref{GLS_criterion} are satisfied. Since $AX=U_{A}\Sigma_{A}$, we have
	\begin{align*}
		A^{\top}(Ax-b)
		= X^{-\top}\Sigma_{A}^{\top}U_{A}^{\top}(U_{A}\Sigma_{A}\Sigma_{A}^{\dag}U_{A}^{\top}b-b) 
		= X^{-\top}(\Sigma_{A}^{\top}\Sigma_{A}\Sigma_{A}^{\dag}-\Sigma_{A}^{\top})U_{A}^{\top}b = \mathbf{0},
	\end{align*}
	since we can easily verify that $\Sigma_{A}^{\top}\Sigma_{A}\Sigma_{A}^{\dag}=\Sigma_{A}^{\top}$. 
	For the second condition in \cref{GLS_criterion}, if we partition $X$ as
	\begin{equation}\label{X_part}
		X =
		\bordermatrix*[()]{%
			X_{1} & X_{2} & X_{3} & X_{4} & n \cr
			q_1 & q_2 & q_3 & n-r \cr
		} \ ,
	\end{equation}
	we can verify that $\calN(A^{\top}A)=\calN(A)=\calR((X_3 \ X_4))$. On the other hand, it holds that $x\in\calR(X\Sigma_{A}^{\dag})$ and 
	\begin{equation}
		X\Sigma_{A}^{\dag} = \begin{pmatrix}
			X_1 & X_{2} & X_{3}
		\end{pmatrix}C_{A}^{\dag}
		= \begin{pmatrix}
			X_1 & X_{2}C_{q_2}^{-1} & \mathbf{0}
		\end{pmatrix},
	\end{equation}
	which means that $x\in\calR((X_1 \ X_2))$. Notice that 
	\begin{align*}
		G
		= X^{-\top}\left(\begin{pmatrix}
			C_{A}^{\top}C_{A} &  \\
			& \mathbf{0}
		\end{pmatrix} + \begin{pmatrix}
			S_{L}^{\top}S_{L} &  \\
			& \mathbf{0}
		\end{pmatrix}\right)X^{-1}
		= X^{-\top}\begin{pmatrix}
			I_{r} &  \\
			& \mathbf{0}
		\end{pmatrix}X^{-1},
	\end{align*}
	leading to $X^{\top}GX=\begin{pmatrix}
		I_{r} &  \\
		& \mathbf{0}
	\end{pmatrix}$. Thus, $\calR(X_4)=\calN(G)$ and the columns of $(X_1 \ X_2 \ X_3)$ are mutually $G$-orthonormal. It follows that $x^{\top}Gz=0$ for any $z\in\calN(A^{\top}A)=\calR((X_3 \ X_4))$. 

	Note that $\calP_{\calR(G)}x=\calP_{\calR(G)}X\Sigma_{A}^{\dag}U_{A}^{\top}b\in\calR(G)$, which is the minimum 2-norm solution of \cref{GLS1} for an arbitrary $b\in\mathbb{R}^{n}$. We immediately obtain \cref{equa2}.
\end{proof}

This theorem provides a direct computational approach for $A_{IL}^{\dag}$ using the GSVD. However, if the matrices are very large, computing the GSVD is extremely expensive. In this case, we need an iterative approach to approximate $A_{ML}^{\dag}b$ for any given $b$.

\section{Iterative method for computing weighted pseudoinverse}\label{sec4}
By \Cref{thm:elden}, computing $A_{ML}^{\dag}b$ is equivalent to computing the minimum 2-norm solution of the GLS problem \cref{GLS}. We aim to approximate the solution of the GLS problem through an iterative process. The starting point comes from \Cref{thm:opLS_solv}. To solve the LS problem \cref{operator_ls}, we apply the GKB process to the operator $\calA$ between the two Hilbert spaces $\calX$ and $\calY$; see \cite{caruso2019convergence} for the GKB for LS problems in Hilbert spaces. Starting from the initial vector $\calP_{\calR(P)}b$, the recursive relations of GKB can be expressed as follows:
\begin{equation}\label{GKB_op}
	\begin{cases}
		\beta_1 u_1 = \calP_{\calR(P)}b,  \\
		\alpha_{i}v_i = \calA^{*}u_i -\beta_i v_{i-1},  \\
		\beta_{i+1}u_{i+1} = \calA v_{i} - \alpha_i u_i,
	\end{cases}
\end{equation}
where $u_{i}\in \calY$ and $v_i\in\calX$, and $\alpha_i$ and $\beta_{i}$ are positive scalars such that $\|v_i\|_{\calX}=\|u_{i}\|_{\calY}=1$. Note that $v_{0}:=\mathbf{0}$ for the initial step. We remark that it is assumed that $\calP_{\calR(P)}b\neq\mathbf{0}$; otherwise, the case is trivial because $A_{ML}^{\dag}b=\mathbf{0}$. Using \Cref{lem:adj}, we present the matrix form of the above recursive relations:
\begin{equation}\label{GKB_1}
	\begin{cases}
		\beta_1 u_1 = PP^{\dag}b,  \\
		\alpha_{i}v_i = G^{\dag}A^{\top}Pu_i -\beta_i v_{i-1},  \\
		\beta_{i+1}u_{i+1} = PP^{\dag}A v_{i} - \alpha_i u_i.
	\end{cases}
\end{equation}
Note that if $G=I_n$ and $P=I_m$, then the above recursive relations correspond to the standard GKB process of the matrix $A$. We name the iterative process corresponding to \cref{GKB_1} the \textit{generalized Golub-Kahan bidiagonalization} (\textsf{gGKB}). Before giving the practical computation procedure, let us explore how to further reduce the computational cost. In fact, computations involving $P^{\dag}$ can be avoided, as demonstrated by the following result.

\begin{lemma}
	For the recursive relations
	\begin{equation}\label{GKB_2}
		\begin{cases}
			\beta_1 \tilde{u}_1 = b,  \\
			\alpha_{i}\tilde{v}_i = G^{\dag}A^{\top}P\tilde{u}_i -\beta_i \tilde{v}_{i-1},  \\
			\beta_{i+1}\tilde{u}_{i+1} = A \tilde{v}_{i} - \alpha_i \tilde{u}_i,
		\end{cases}
	\end{equation}
	where $\tilde{v}_0:=\mathbf{0}$, if $\alpha_i$ and $\beta_i$ are the same as those in \cref{GKB_1}, then $v_i=\tilde{v}_i$ and $u_i=PP^{\dag}\tilde{u}_i$.
\end{lemma}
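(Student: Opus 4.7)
The plan is to prove both identities simultaneously by induction on $i$, relying on a single algebraic fact: $P \cdot PP^{\dag} = PP^{\dag} \cdot P = P$. This follows from the symmetry of $P$ together with $PP^{\dag} = \calP_{\calR(P)}$, since any vector of the form $Px$ lies in $\calR(P)$ and is therefore fixed by the projector. This identity is exactly what lets one drop $PP^{\dag}$ whenever it is sandwiched against $P$, which is the mechanism making the recursions \cref{GKB_1} and \cref{GKB_2} compatible.

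For the base case $i=1$, I would compare $\beta_1 u_1 = PP^{\dag}b$ from \cref{GKB_1} with $\beta_1 \tilde{u}_1 = b$ from \cref{GKB_2} (same $\beta_1$ by hypothesis) and apply $PP^{\dag}$ to the latter to conclude $\beta_1 PP^{\dag}\tilde{u}_1 = PP^{\dag}b = \beta_1 u_1$, giving $u_1 = PP^{\dag}\tilde{u}_1$. For the inductive step, assume $v_j = \tilde{v}_j$ for $j\leq i-1$ and $u_j = PP^{\dag}\tilde{u}_j$ for $j \leq i$. Substituting $u_i = PP^{\dag}\tilde{u}_i$ into the second line of \cref{GKB_1} and using $P\cdot PP^{\dag}=P$ yields
\begin{equation*}
\alpha_i v_i = G^{\dag}A^{\top}P\bigl(PP^{\dag}\tilde{u}_i\bigr) - \beta_i \tilde{v}_{i-1} = G^{\dag}A^{\top}P\tilde{u}_i - \beta_i \tilde{v}_{i-1} = \alpha_i \tilde{v}_i,
\end{equation*}
so $v_i = \tilde{v}_i$. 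Substituting $v_i=\tilde{v}_i$ and $u_i = PP^{\dag}\tilde{u}_i$ into the third line of \cref{GKB_1} and factoring out the projector give $\beta_{i+1}u_{i+1} = PP^{\dag}\bigl(A\tilde{v}_i - \alpha_i \tilde{u}_i\bigr) = \beta_{i+1}PP^{\dag}\tilde{u}_{i+1}$, hence $u_{i+1} = PP^{\dag}\tilde{u}_{i+1}$, closing the induction.

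There is no real obstacle: the argument is a clean induction driven entirely by the identity $P\cdot PP^{\dag}=P$. The conceptual content is merely that the component of $\tilde{u}_i$ lying in $\calN(P)$ is invisible to the recursion, since it is annihilated by the $P$ appearing to the left of $\tilde{u}_i$ in the middle recurrence. Consequently the initial projection $PP^{\dag}b$ in \cref{GKB_1} may be skipped and the unreduced vector $\tilde{u}_i$ propagated without affecting the generated $v_i$.
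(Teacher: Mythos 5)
Your proof is correct and follows essentially the same route as the paper: induction on $i$, using the identity $PPP^{\dag}=P$ (equivalently $PP^{\dag}=\calP_{\calR(P)}$ fixes vectors in $\calR(P)$) to drop the projector when it is multiplied by $P$, and applying $PP^{\dag}$ to the $u$-recursion to pass between $\tilde{u}_i$ and $u_i$. The only difference is cosmetic bookkeeping in how the induction hypothesis is staggered between the $u$- and $v$-sequences.
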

\begin{proof}
	We prove it by mathematical induction. For $i=1$, we have $\beta_{1}PP^{\dag}\tilde{u}_1=PP^{\dag}b=\beta_1u_1$, implying $u_{1}=PP^{\dag}\tilde{u}_1$. Note that $\tilde{v}_0=v_0$, and $Pu_1=PPP^{\dag}\tilde{u}_1=P\tilde{u}_1$. It follows that $\alpha_1v_1=\alpha_1\tilde{v}_1$, meaning $v_1=\tilde{v}_1$. Now assume $v_i=\tilde{v}_i$ and $u_i=PP^{\dag}\tilde{u}_i$ for $i\geq 1$. Then 
	\[PP^{\dag}Av_i-\alpha_iu_i=PP^{\dag}A\tilde{v}_i-\alpha_i PP^{\dag}\tilde{u}_i 
	= PP^{\dag}(A\tilde{v}_i-\alpha_i\tilde{u}_i)=\beta_{i+1}PP^{\dag}\tilde{u}_{i+1} . \]
	Thus, we have $\beta_{i+1}u_{i+1}=\beta_{i+1}PP^{\dag}\tilde{u}_{i+1}$, meaning $u_{i+1}=PP^{\dag}\tilde{u}_{i+1}$. Similar to the proof for $i=1$, using $Pu_{i+1}=PPP^{\dag}\tilde{u}_{i+1}=P\tilde{u}_{i+1}$, we can also prove $v_{i+1}=\tilde{v}_{i+1}$. 
\end{proof}

Using the above result, we can simplify the computation if we only need to generate $v_{i}$ but not $u_i$. At the initial step, we have
\[\beta_{1}=\|PP^{\dag}b\|_{P}=[(PP^{\dag}b)^{\top}PPP^{\dag}b]^{1/2}= (b^{\top}Pb)^{1/2} .\]
At the $i$-th step, to compute $\beta_{i+1}$, let $r_i=Av_{i} - \alpha_i \tilde{u}_i$. Then we have $\beta_{i+1}u_{i+1}=PP^{\dag}r_i$. Thus, it follows that
\[\beta_{i+1}=\|PP^{\dag}r_i\|_{P}=[(PP^{\dag}r_i)^{\top}PPP^{\dag}r_i]^{1/2}= (r_{i}^{\top}Pr_i)^{1/2} . \]
Now we can give the whole iterative procedure of the \textsf{gGKB} process, as shown in \Cref{alg:gGKB}.

Note that at each iteration of \textsf{gGKB} we need to compute $G^{\dag}\bar{s}$. For large-scale matrices, it is generally impractical to obtain $G^{\dag}$ directly. If $G$ is sparse and positive definite, we can first apply the sparse Cholesky factorization to $G$ and then compute $G^{\dag}$. Otherwise, using the fact that $G^{\dag}$ is the minimum 2-norm solution of $\min_{s\in\mathbb{R}^{n}}\|Gs-\bar{s}\|_2$, we can apply the iterative solver LSQR to $\min_{s\in\mathbb{R}^{n}}\|Gs-\bar{s}\|_2$ to approximate $G^{\dag}\bar{s}$ \cite{Paige1982}. 

\begin{algorithm}[htb]
	\caption{Generalized Golub-Kahan bidiagonalization (\textsf{gGKB})}\label{alg:gGKB}
	\begin{algorithmic}[1]
		\Require $A\in\mathbb{R}^{m\times n}$, $P\in\mathbb{R}^{m\times m}$, $Q\in\mathbb{R}^{n\times n}$, $b\in\mathbb{R}^{m}$
		\State Form $G=A^{\top}PA+Q$
		\State Compute $\beta_1=(b^{\top}Pb)^{1/2}$, \ $\tilde{u}_1=b/\beta_1$
		\State Compute $\bar{s}=A^{\top}Pu_1$, \ $s=G^{\dag}\bar{s}$
		\State $\alpha_{1}=(s^{\top}Gs)^{1/2}$, \ $v_{1}=s/\alpha_{1}$  
		\For {$i=1,2,\dots,k,$}
		\State $r=Av_i-\alpha_i\tilde{u}_i$
		\State $\beta_{i+1}=(r^{\top}Pr)^{1/2}$, \ $\tilde{u}_{i+1}=r/\beta_{i+1}$
		\State $\bar{s}=A^{\top}P\tilde{u}_{i+1}$, \ $s=G^{\dag}\bar{s}-\beta_{i+1}v_i$
		\State $\alpha_{i+1}= (s^{\top}Gs)^{1/2}$, \ $v_{i+1} = s/\alpha_{i+1}$
		\EndFor
		\Ensure $\{\alpha_i, \beta_i\}_{i=1}^{k+1}$, \ $\{\tilde{u}_i, v_i\}_{i=1}^{k+1}$  \Comment{$u_{i}=PP^{\dag}\tilde{u}_i$}
	\end{algorithmic}	
\end{algorithm}

We remark that in \cite{li2024characterizing} the author generalized the GKB process for computing nontrivial GSVD components of $\{A, L\}$. Here the proposed \textsf{gGKB} process is used to iteratively solve the GLS problem and is more versatile, as it can handle cases where $P$ is noninvertible.

The following result describes the subspaces generated by \textsf{gGKB}.

\begin{proposition}\label{prop:gGKB1}
	The \textsf{gGKB} process generates vectors $v_i\in\calR(G)$ and $u_i\in\calR(P)$, and $\{v_i\}_{i=1}^{k}$ is a $G$-orthonormal basis of the Krylov subspace 
	\begin{equation}\label{krylov1}
		\mathcal{K}_k(G^{\dag}A^{\top}PA, G^{\dag}A^{\top}Pb) = \mathrm{span}\{(G^{\dag}A^{\top}PA)^{i}G^{\dag}A^{\top}Pb\}_{i=0}^{k-1},
	\end{equation}
	and $\{u_i\}_{i=1}^{k}$ is a $P$-orthonormal basis of the Krylov subspace 
	\begin{equation}\label{krylov2}
		\mathcal{K}_k(PP^{\dag}AG^{\dag}A^{\top}P, PP^{\dag}b) = PP^{\dag}\mathrm{span}\{(AG^{\dag}A^{\top}P)^{i}b \}_{i=0}^{k-1}.
	\end{equation}
\end{proposition}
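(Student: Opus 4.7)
\textbf{Proof proposal for \Cref{prop:gGKB1}.} The plan is to view the \textsf{gGKB} recursion \cref{GKB_1} as the coordinate representation, in the canonical bases of $\mathbb{R}^{n}$ and $\mathbb{R}^{m}$, of the operator GKB \cref{GKB_op} applied to the bounded operator $\calA:\calX\to\calY$ with starting vector $\calP_{\calR(P)}b\in\calY$. Once this identification is made, both the orthonormality statements and the Krylov subspace identities become the standard conclusions of the Golub–Kahan process in Hilbert space, transported back to the canonical basis via \Cref{lem:adj}.

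First, I would prove the range statements $v_i\in\calR(G)$ and $u_i\in\calR(P)$ by induction on $i$. The base case is immediate: $u_1=\beta_1^{-1}PP^{\dag}b\in\calR(P)$, and then $v_1=\alpha_1^{-1}G^{\dag}A^{\top}Pu_1\in\calR(G^{\dag})=\calR(G)$. For the inductive step, the recursion $\beta_{i+1}u_{i+1}=PP^{\dag}Av_i-\alpha_i u_i$ keeps $u_{i+1}\in\calR(P)$, and $\alpha_{i+1}v_{i+1}=G^{\dag}A^{\top}Pu_{i+1}-\beta_{i+1}v_i$ keeps $v_{i+1}\in\calR(G)$. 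In particular, $\calP_{\calR(G)}v_i=v_i$ and $\calP_{\calR(P)}u_i=P^{\dag}Pu_i=u_i$, so by \Cref{lem:adj} the vectors $v_i$ and $u_i$ are exactly the coordinate representations of the abstract iterates generated by \cref{GKB_op}, and $\|v_i\|_{\calX}=(v_i^{\top}Gv_i)^{1/2}=1$, $\|u_i\|_{\calY}=(u_i^{\top}Pu_i)^{1/2}=1$ hold by construction.

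Next, I would establish orthonormality by the standard induction for Golub–Kahan: assuming that $\{v_1,\dots,v_i\}$ is $G$-orthonormal and $\{u_1,\dots,u_i\}$ is $P$-orthonormal, take the inner product of the recursion for $\beta_{i+1}u_{i+1}$ with $u_j$ in $\langle\cdot,\cdot\rangle_P$ for $j\leq i$, use the adjoint relation $\langle\calA v,u\rangle_P=\langle v,\calA^{*}u\rangle_G$, and reduce to previously computed inner products, which vanish by the inductive hypothesis and the definition of $\alpha_i,\beta_{i+1}$; the same argument with $\calA$ replaced by $\calA^{*}$ handles $\alpha_{i+1}v_{i+1}$ against $v_j$. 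This is entirely formal once \Cref{lem:adj} is in hand.

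Finally, for the Krylov identities I would argue by induction on $k$. For $k=1$, note $P\,PP^{\dag}b=Pb$, so $v_1$ is a scalar multiple of $G^{\dag}A^{\top}Pb$, and $u_1$ is a scalar multiple of $PP^{\dag}b$; this matches the first basis vectors of the two Krylov spaces. The inductive step uses that $\alpha_{i+1}v_{i+1}+\beta_{i+1}v_i=G^{\dag}A^{\top}Pu_{i+1}$ together with $\beta_{i+1}u_{i+1}+\alpha_i u_i=PP^{\dag}Av_i$, so that $v_{i+1}\in\mathrm{span}\{v_1,\dots,v_i,G^{\dag}A^{\top}P\,PP^{\dag}Av_i\}$; the identity $PP^{\dag}PA=PA$ collapses $G^{\dag}A^{\top}P\,PP^{\dag}A=G^{\dag}A^{\top}PA$, so one additional power of $G^{\dag}A^{\top}PA$ is introduced exactly as required by \cref{krylov1}, and an analogous computation handles \cref{krylov2}. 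Together with the orthonormality already proved, the vectors span the asserted Krylov spaces. The main delicate point is the bookkeeping involving the oblique projectors $PP^{\dag}$ and $G^{\dag}G$ in the Krylov step; keeping track of the identity $PP^{\dag}P=P$ (and $GG^{\dag}G=G$) at the right moments is what makes the generators of \cref{krylov1}–\cref{krylov2} come out cleanly.
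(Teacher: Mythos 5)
Your proposal is correct and follows essentially the same route as the paper: identify the gGKB recursion \cref{GKB_1} as the coordinate form of the operator GKB \cref{GKB_op} for $\calA:\calX\to\calY$ with starting vector $\calP_{\calR(P)}b$, then use \Cref{lem:adj} together with $PPP^{\dag}=P$ to convert the operator Krylov subspaces $\mathcal{K}_k(\calA^{*}\calA,\calA^{*}\calP_{\calR(P)}b)$ and $\mathcal{K}_k(\calA\calA^{*},\calP_{\calR(P)}b)$ into \cref{krylov1} and \cref{krylov2}. The only difference is that you rederive the standard Hilbert-space GKB facts (ranges, orthonormality, Krylov containment plus a dimension count) by induction, whereas the paper simply cites them.
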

\begin{proof}
	The proof is based on the property of GKB for linear compact operators. As demonstrated above, the \textsf{gGKB} of $A$ is essentially the GKB of $\calA$ between the two Hilbert spaces $\calX$ and $\calY$. Therefore, the generated vectors satisfy $v_i\in\calR(G)$ and $u_i\in\calR(P)$, and $\{v_i\}_{i=1}^{k}$ and $\{u_i\}_{i=1}^{k}$ are $G$- and $P$-orthonormal bases of the Krylov subspaces $\mathcal{K}_k(\calA^{*}\calA,\calA^{*}\calP_{\calR(P)}b)$ and $\mathcal{K}_k(\calA\calA^{*},\calP_{\calR(P)}b)$, respectively. By \Cref{lem:adj}, we have
	\begin{equation*}
		(\calA^{*}\calA)^{i}\calA^{*}\calP_{\calR(P)}b = (G^{\dag}A^{\top}PPP^{\dag}A)^{i}G^{\dag}A^{\top}PPP^{\dag}b
		= (G^{\dag}A^{\top}PA)^{i}G^{\dag}A^{\top}Pb ,
	\end{equation*}
	and
	\begin{equation*}
		(\calA\calA^{*})^{i}\calP_{\calR(P)}b = (PP^{\dag}AG^{\dag}A^{\top}P)^{i}PP^{\dag}b
		= PP^{\dag}(AG^{\dag}A^{\top}P)^{i}b .
	\end{equation*}
	The desired result immediately follows.
\end{proof}

Using \cref{GKB_2}, one can also verify that $\mathrm{span}\{\tilde{u}_i\}_{i=1}^{k}=\mathcal{K}_k(AG^{\dag}A^{\top}P,b)$. Since the dimensions of $\calX$ and $\calY$ are $r_G=\mathrm{rank}(G)$ and $r_P=\mathrm{rank}(P)$, respectively, by \Cref{prop:gGKB1} the \textsf{gGKB} process will eventually terminate in at most $\min\{r_G,r_P\}$ steps. Here ``terminate'' means that $\alpha_{i}$ or $\beta_i$ equals zero at the current step, thereby the Krylov subspaces can not expand any longer. The ``terminate step" can be defined as
\begin{equation}\label{gGKB_termi}
	k_t=\min\{k: \alpha_{k+1}\beta_{k+1}=0\}.
\end{equation}
Suppose \textsf{gGKB} does not terminate before the $k$-th iteration, i.e. $\alpha_{i}\beta_{i}\neq 0$ for $1\leq i \leq k$. Then the $k$-step \textsf{gGKB} process generates a $G$-orthonormal matrix $V_{k}=(v_1,\dots,v_{k})\in \mathbb{R}^{n\times k}$ and a $P$-orthonormal matrix $U_{k}=(u_1,\dots,u_{k})\in \mathbb{R}^{m\times k}$, which satisfy the relations
\begin{equation}{\label{eq:GKB_matForm}}
	\begin{cases}
		\beta_1U_{k+1}e_{1} = PP^{\dag}b,  \\
		PP^{\dag}AV_k = U_{k+1}B_k,  \\
		G^{\dag}A^{\top}PU_{k+1} = V_kB_{k}^{T}+\alpha_{k+1}v_{k+1}e_{k+1}^\top ,
	\end{cases}
\end{equation}
where $e_1$ and $e_{k+1}$ are the first and $(k+1)$-th columns of $I_{k+1}$, and 
\begin{equation}
	B_{k}
	=\begin{pmatrix}
		\alpha_{1} & & & \\
		\beta_{2} &\alpha_{2} & & \\
		&\beta_{3} &\ddots & \\
		& &\ddots &\alpha_{k} \\
		& & &\beta_{k+1}
		\end{pmatrix}\in  \mathbb{R}^{(k+1)\times k}
\end{equation}
has full column rank. Note that it may happens that $\beta_{k+1}=0$, which means that \textsf{gGKB} terminates just at the $k$-th step and $u_{k+1}=\mathbf{0}$. 

Based on \textsf{gGKB}, we can design an iterative approach for solving \cref{operator_ls}, which will also solve \cref{GLS}. Note that under the canonical bases, we can rewrite \cref{operator_ls} as 
\begin{equation}\label{matrix_gls}
	\min_{x\in\calR(G)}\|PP^{\dag}A-PP^{\dag}b\|_P
\end{equation}
From $k=1$ onwards, we seek an approximate solution to \cref{matrix_gls} in the subspace $\mathrm{span}\{V_{k}\}$. By letting $x=V_{k}y$ with $y\in\mathbb{R}^{k}$, we obtain from \cref{eq:GKB_matForm} that
\begin{align*}
  &  \min_{x\in\mathrm{span}\{V_{k}\}}\|PP^{\dag}Ax-PP^{\dag}b\|_P 
  = \min_{y\in\mathbb{R}^{k}}\|PP^{\dag}AV_{k}y-\beta_1U_{k+1}e_{1}\|_P \\
  &= \min_{y\in\mathbb{R}^{k}}\|U_{k+1}(B_{k}y-\beta_1 e_{1})\|_P 
  = \min_{y\in\mathbb{R}^{k}}\|B_{k}y-\beta_1 e_{1}\|_2 ,
\end{align*}
where we have used that $\{u_{i}\}$ are $P$-orthonormal. Note that 
$$\argmin_{y\in\mathbb{R}^{k}}\|B_{k}y-\beta_1 e_{1}\|_2=B_k^{\dag}\beta_1 e_{1}=:y_k$$
since $B_k$ has full column rank.
Therefore, at the $k$-th iteration, the iterative approximation to \cref{matrix_gls} is given by
\begin{equation}\label{x_k}
  x_k = V_{k}y_k=V_{k}B_k^{\dag}\beta_1 e_{1}.
\end{equation}

The above approach is very similar to the LSQR algorithm for the standard LS problem \cite{Paige1982}. Moreover, the bidiagonal structure of $B_k$ enables the design of a recursive procedure to update  $x_k$ step by step, without explicitly computing $B_k^{\dag}\beta_1 e_{1}$ at each iteration. This procedure is based on the Givens QR factorization of $B_k$; see \cite[Section 4.1]{Paige1982} for details. Note that $u_i$ is not required for computing $x_k$, so there is no need to compute $PP^{\dag}$ in \textsf{gGKB}. We summarize the iterative algorithm for solving \cref{matrix_gls} in \Cref{alg:alg2}, which is named the \textit{generalized LSQR} (\textsf{gLSQR}) algorithm.

\begin{algorithm}[htb]
	\caption{Generalized LSQR (\textsf{gLSQR})}\label{alg:alg2}
	\begin{algorithmic}[1]
		\Require $A\in\mathbb{R}^{m\times n}$, $P\in\mathbb{R}^{m\times m}$, $Q\in\mathbb{R}^{n\times n}$, $b\in\mathbb{R}^{m}$
		\State \textbf{(Initialization)}
		\State Compute $\beta_1\tilde{u}_1=b$, $\alpha_1v_1=G^{\dag}A^{\top}P\tilde{u}_1$
    	\State Set $x_0=\mathbf{0}$, $w_1=v_1$, $\bar{\phi}_{1}=\beta_1$, $\bar{\rho}_1=\alpha_1$
		\For {$i=1,2,\dots$ until convergence,}
    \State \textbf{(Applying the gGKB process)}
		\State $\beta_{i+1}\tilde{u}_{i+1}=Av_{i}-\alpha_i\tilde{u}_{i}$ 
		\State $\alpha_{i+1}v_{i+1}=G^{\dag}A^{\top}P\tilde{u}_{i+1}-\beta_{i+1}v_{i}$
    \State \textbf{(Applying the Givens QR factorization to $B_k$)}
		\State $\rho_{i}=(\bar{\rho}_{i}^{2}+\beta_{i+1}^{2})^{1/2}$
		\State $c_{i}=\bar{\rho}_{i}/\rho_{i}$
    \State $s_{i}=\beta_{i+1}/\rho_{i}$
		\State$\theta_{i+1}=s_{i}\alpha_{i+1}$ 
    \State $\bar{\rho}_{i+1}=-c_{i}\alpha_{i+1}$
		\State $\phi_{i}=c_{i}\bar{\phi}_{i}$ 
    \State $\bar{\phi}_{i+1}=s_{i}\bar{\phi}_{i}$
    \State \textbf{(Updating the solution)}
		\State $x_{i}=x_{i-1}+(\phi_{i}/\rho_{i})w_{i} $
		\State $w_{i+1}=v_{i+1}-(\theta_{i+1}/\rho_{i})w_{i}$
		\EndFor
	\Ensure Approximate minimum 2-norm solution of \cref{GLS}: $x_k$
	\end{algorithmic}
\end{algorithm}

As the iteration proceeds, the $k$-th solution $x_k$ gradually approximates the true solution of \cref{matrix_gls}, and consequently of \cref{GLS}. We now state this property precisely.

\begin{theorem}\label{thm:naive}
  Suppose the \textsf{gGKB} process terminates at step $k_t$. Then  $x_{k_t}$ obtained by \textsf{gLSQR} is the exact minimum $2$-norm solution of \cref{GLS}.
\end{theorem}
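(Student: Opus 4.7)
The plan is to reduce everything to the operator picture developed in \Cref{sec3}. By \Cref{thm:opLS_solv}, the minimum 2-norm solution of \cref{GLS} coincides with the unique best-approximate solution of the operator equation $\calA v = \calP_{\calR(P)}b$ in the Hilbert space $\calX=(\calR(G),\langle\cdot,\cdot\rangle_{G})$; equivalently, by \Cref{LS_criterion}, it is characterized by the two conditions $\calA^{*}\calA v = \calA^{*}\calP_{\calR(P)}b$ and $v\perp_{\calX}\calN(\calA)$. So the goal is to verify that the iterate $x_{k_t}=V_{k_t}B_{k_t}^{\dag}\beta_1 e_1$ produced by \textsf{gLSQR} satisfies both of these.

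First I would handle the orthogonality condition. By \Cref{prop:gGKB1} the columns of $V_{k_t}$ span the Krylov subspace $\mathcal{K}_{k_t}(\calA^{*}\calA,\calA^{*}\calP_{\calR(P)}b)$, which lies inside $\calR(\calA^{*})$ since $\calA^{*}\calP_{\calR(P)}b\in\calR(\calA^{*})$ and $\calA^{*}\calA$ preserves $\calR(\calA^{*})$. Because $\calX$ is finite-dimensional, $\calR(\calA^{*})=\calN(\calA)^{\perp_{\calX}}$, and since $x_{k_t}\in\mathrm{span}(V_{k_t})$, the orthogonality $x_{k_t}\perp_{\calX}\calN(\calA)$ follows immediately.

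Next, I would verify the normal equation through a case split on the termination flavor defined by \cref{gGKB_termi}. In the case $\beta_{k_t+1}=0$, the last row of $B_{k_t}$ vanishes and its leading $k_t\times k_t$ block is a nonsingular lower bidiagonal matrix, so $y_{k_t}$ solves $B_{k_t}y=\beta_1e_1$ with zero residual; using the matrix relations in \cref{eq:GKB_matForm}, this translates to $\calA x_{k_t}-\calP_{\calR(P)}b=U_{k_t+1}(B_{k_t}y_{k_t}-\beta_1 e_1)=\mathbf{0}$, so the normal equation holds trivially. In the case $\alpha_{k_t+1}=0$, the recursion gives $\calA^{*}U_{k_t+1}=V_{k_t}B_{k_t}^{\top}$, and since $y_{k_t}$ is the ordinary LS solution of $\min\|B_{k_t}y-\beta_1 e_1\|_2$ the Euclidean normal equation $B_{k_t}^{\top}(B_{k_t}y_{k_t}-\beta_1 e_1)=\mathbf{0}$ holds. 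Multiplying by $V_{k_t}$ and using $\calP_{\calR(P)}b=U_{k_t+1}\beta_1e_1$ together with $\calA V_{k_t}=U_{k_t+1}B_{k_t}$ yields $\calA^{*}(\calA x_{k_t}-\calP_{\calR(P)}b)=\mathbf{0}$, as required.

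The main obstacle I anticipate is not any single algebraic manipulation but rather the careful bookkeeping around the two termination scenarios: in the case $\beta_{k_t+1}=0$ the vector $u_{k_t+1}$ is not generated, so one must interpret $U_{k_t+1}$ and the relations \cref{eq:GKB_matForm} consistently (e.g.\ by extending with a zero column), while in the case $\alpha_{k_t+1}=0$ the residual is generally nonzero and the argument genuinely uses the adjoint recursion. Once both cases are settled, combining them with the orthogonality established above gives, via \Cref{LS_criterion}, that $x_{k_t}=\calA^{\dag}\calP_{\calR(P)}b$, and then \Cref{thm:AML} together with \Cref{thm:opLS_solv} identifies this with $A_{ML}^{\dag}b$, the minimum 2-norm solution of \cref{GLS}.
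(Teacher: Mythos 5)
Your proposal is correct and takes essentially the same route as the paper's proof: both verify that the terminal iterate satisfies the two optimality conditions by combining \cref{prop:gGKB1}, the gGKB relations \cref{eq:GKB_matForm}, and the normal equation $B_{k_t}^{\top}B_{k_t}y_{k_t}=B_{k_t}^{\top}\beta_1 e_1$ of the projected problem. The only cosmetic differences are that you argue at the operator level (invoking \cref{LS_criterion} and \cref{thm:opLS_solv} at the end, and using $\calR(\calA^{*})\subseteq\calN(\calA)^{\perp}$ for the orthogonality) and split the termination into the cases $\beta_{k_t+1}=0$ and $\alpha_{k_t+1}=0$, whereas the paper checks the matrix criterion \cref{GLS_criterion} directly and disposes of both cases at once through the single factor $\alpha_{k_t+1}\beta_{k_t+1}=0$.
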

\begin{proof}
	Since $x_{k_t}\in\mathrm{span}\{V_k\}\subseteq\calR(G)$, by \Cref{thm:GLS_solv}, we only need to verify that $x_{k_t}$ satisfies the two conditions in \cref{GLS_criterion}.

	Step 1: prove $A^{\top}P(Ax_{k_t}-b)=\mathbf{0}$. By writing $x_{k_t}$ as $x_{k_t}=V_{k_t}y_{k_t}$, we obtain from \cref{eq:GKB_matForm} that
	\begin{equation*}
		P(Ax_{k_t}-b) = P\calP_{\calR(P)}(Ax_{k_t}-b)
		= PU_{k_t+1}(B_{k_t}y_t-\beta_1 e_{1}) .
	\end{equation*} 
	Using \cref{eq:GKB_matForm} again, we get
	\begin{align*}
		G^{\dag}A^{\top}P(Ax_{k_t}-b) 
		&= G^{\dag}A^{\top}PU_{k_t+1}(B_{k_t}y_{k_t}-\beta_{1}e_1) \\
		&= (V_{k_t}B_{k_t}^{\top}+\alpha_{k_t+1}v_{k_t+1}e_{k+1}^{\top})(B_{k_t}y_{k_t}-\beta_{1}e_1) \\
		&= V_{k_t}(B_{k_t}^{\top}B_{k_t}y_{k_t}- B_{k_t}^{\top}\beta_{1}e_{1}) + \alpha_{k_t+1}\beta_{k_t+1}v_{k_t+1}e_{k_t}^{\top}y_{k_t} \\
		&= \alpha_{k_t+1}\beta_{k_t+1}v_{k_t+1}e_{k_t}^{\top}y_{k_t} \\
		&= \mathbf{0},
	  \end{align*}
	  since $\alpha_{k_t+1}\beta_{k_t+1}=0$ and $B_{k_t}^{\top}B_{k_t}y_{k_t}=B_{k_t}^{\top}\beta_{1}e_{1}$ due to $y_{k_t}=\argmin_{y}\|B_{k_t}y-\beta_{1}e_1\|_2$. Note that $x_{k_t}\in\calR(G)$. Using the same approach as in the proof (Step 1) of \Cref{thm:opLS_solv}, we obtain $A^{\top}P(Ax_{k_t}-b)=\mathbf{0}$.

	Step 2: prove $x_{k_t}^{\top}Gz=0$ for any $z\in\calN(A^{\top}PA)$. By \Cref{prop:gGKB1}, we have $x_{k_t}\in\calR(V_{k_t})\subseteq\calR(G^{\dag}A^{\top}P)$. Let $x_{k_t}=G^{\dag}A^{\top}Pw$. Then for any $z\in\calN(A^{\top}PA)$ we have
	\begin{align*}
		x_{k_t}^{\top}G z= (G^{\dag}A^{\top}Pw)^{\top}Gz
		= w^{\top}PAG^{\dag}Gz .
	\end{align*}
	Recall that $\calP_{\calR(G)}(\mathcal{N}(A^{\top}PA))\subseteq \mathcal{N}(A^{\top}PA)=\calN(L_{P}A)$, which has been proved in the proof  of \Cref{thm:GLS_solv}. Thus, $PAG^{\dag}Gz=L_{p}^{\top}L_{P}A\calP_{\calR(G)}z=\mathbf{0}$, leading to $x_{k_t}^{\top}Gz=\mathbf{0}$.
\end{proof}


An iterative algorithm should include a stopping criterion to decide whether the current iteration can be stopped to obtain a solution with acceptable approximation accuracy. Notice from \Cref{thm:opLS_solv} that $\calA^{*}(\calA x_{k}-\calP_{\calR(P)}b)=:\calA^{*}r_k$ would be zero at the iteration where the accurate solution is computed. The scaling invariant quantity $\|\calA^{*}r_{k}\|_G/(\|\calA\|\|\calP_{\calR(P)}b\|_P)$ can be used to measure the accuracy of the iterative solution, where $\|\calA\|$ is the operator norm defined as $\|\calA\|:=\max_{v\in\calR(G) \atop v\neq\mathbf{0}}\frac{\|\calA v\|_{P}}{\|v\|_{G}}$. Thus, we can use 
\begin{equation}\label{stop}
	\frac{\|\calA^{*}r_{k}\|_G}{\|\calA\|\|\calP_{\calR(P)}b\|_P} \leq \mathtt{tol}
\end{equation}
as a stopping criterion for \textsf{gLSQR}. To ensure computational practicality, we discuss how to compute the quantities in \cref{stop}. It is obviously that $\|\calP_{\calR(P)}b\|_P=(b^{\top}Pb)^{1/2}$. Using \cref{lem:adj} and the procedure in the proof of \Cref{thm:naive}, we get
\begin{align*}
	\|\calA^{*}r_{k}\|_G = \|G^{\dag}A^{\top}P\calP_{\calR(P)}(Ax_k-b)\|_G
	= \|\alpha_{k+1}\beta_{k+1}v_{k+1}e_{k}^{\top}y_{k}\|_G
	= \alpha_{k+1}\beta_{k+1}|e_{k}^{\top}y_{k}| ,
\end{align*}
where we have used $\|v_i\|_G=1$. Therefore, $\|\calA^{*}r_{k}\|_G$ can be computed quickly with very little additional cost.

To obtain an accurate estimate of $\|\calA\|$, here we consider the GLS problem \cref{GLS1} with $M=I_m$ for simplicity. Note that any GLS problem can be reduced to this form by substituting $A\leftarrow MA$ and $b\leftarrow Mb$. We give a matrix expression for $\|\calA\|$ using the GSVD of $\{A,L\}$. 

\begin{proposition}
	Suppose the GSVD of $\{A,L\}$ has the form \cref{GSVD1}. Then we have
	\begin{equation}
		\|\calA\| = \sigma_{\max}(C_{A}),
	\end{equation}
	which is the largest singular value of $C_{A}$.
\end{proposition}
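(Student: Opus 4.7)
The plan is to reduce the definition of $\|\calA\|$ to a concrete matrix 2-norm problem via a change of variable provided by the GSVD. Since we are assuming $M=I_m$, we have $P=I_m$, hence $\calP_{\calR(P)}=I_m$ and $\calA v = Av$, so
\[
\|\calA\|^2 = \sup_{v\in\calR(G),\, v\neq\mathbf{0}} \frac{\|Av\|_2^2}{\|v\|_G^2}.
\]
My strategy is to parametrize $v$ by writing $v=Xz$ with $X$ the invertible GSVD factor of \Cref{thm_gsvd}, express both numerator and denominator in closed form, and then show that the supremum equals $\sigma_{\max}(C_A)^2$.

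For the numerator, using $A=U_A\Sigma_A X^{-1}$ together with the orthogonality of $U_A$, I would obtain $\|Av\|_2^2=\|\Sigma_A z\|_2^2=\|C_A z_{1:r}\|_2^2$, where $z_{1:r}$ collects the first $r$ entries of $z$ according to the block structure in \cref{gsvd3}. For the denominator, I would reuse the identity $X^\top G X=\mathrm{diag}(I_r,\mathbf{0})$ that was already established inside the proof of \Cref{thm:wp_gsvd}, which gives $\|v\|_G^2=\|z_{1:r}\|_2^2$. These two formulas convert the ratio $\|Av\|_2^2/\|v\|_G^2$ into the Rayleigh quotient $\|C_A z_{1:r}\|_2^2/\|z_{1:r}\|_2^2$.

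The step I expect to be the main subtlety is the constraint $v\in\calR(G)$: because $X$ is not orthogonal, $\calR(G)=\calN(G)^\perp$ does not translate simply to a condition on any specific block of $z$ (in particular, it is \emph{not} $z_4=0$). I would circumvent this by showing that the restriction can be dropped without affecting the supremum. Indeed, when $P=I_m$ we have $\calN(G)=\calN(A^\top A)\cap\calN(L^\top L)\subseteq\calN(A)$, so for any decomposition $v=v_1+v_2$ with $v_1\in\calR(G)$ and $v_2\in\calN(G)$ one gets $Av=Av_1$ and $\|v\|_G=\|v_1\|_G$. Hence the supremum over $\calR(G)$ equals the supremum over all $v\in\mathbb{R}^n$ with $\|v\|_G>0$, which under $v=Xz$ corresponds exactly to all $z$ with $z_{1:r}\neq\mathbf{0}$.

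Combining these reductions, I would conclude
\[
\|\calA\|^2 = \sup_{z_{1:r}\neq\mathbf{0}} \frac{\|C_A z_{1:r}\|_2^2}{\|z_{1:r}\|_2^2} = \|C_A\|_2^2 = \sigma_{\max}(C_A)^2,
\]
and therefore $\|\calA\|=\sigma_{\max}(C_A)$. The argument is clean because the $G$-inner product on $\calR(G)$ is exactly the Euclidean pullback of $X^{-1}$ restricted to the first $r$ coordinates, which is precisely the norm in which $C_A$ acts as a standard linear map on $\mathbb{R}^r$.
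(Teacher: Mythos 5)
Your proof is correct, and it handles the one genuinely delicate point --- the constraint $v\in\calR(G)$ --- by a different device than the paper. The paper keeps the constraint and parametrizes $\calR(G)$ directly: it sets $\widetilde{X}_1=(X_1\ X_2\ X_3)$, proves that $\calP_{\calR(G)}\widetilde{X}_1$ has full column rank so that $\calR(G)=\calP_{\calR(G)}\calR(\widetilde{X}_1)$, and then writes $v=\calP_{\calR(G)}\widetilde{X}_1y$, using $\calN(G)\subseteq\calN(A)$ to get $Av=U_AC_Ay$ and $X^{\top}GX=\begin{pmatrix} I_r & \\ & \mathbf{0}\end{pmatrix}$ to get $\|v\|_G=\|y\|_2$. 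You instead observe that, since $\calN(G)=\calN(A)\cap\calN(L)\subseteq\calN(A)$, adding a $\calN(G)$-component changes neither $\|Av\|_2$ nor $\|v\|_G$, so the supremum over $\calR(G)$ equals the supremum over all $v$ with $\|v\|_G>0$; this lets you use the clean invertible substitution $v=Xz$ and reduces everything to the Rayleigh quotient $\|C_Az_{1:r}\|_2/\|z_{1:r}\|_2$. Both arguments rest on the same two facts ($\calN(G)\subseteq\calN(A)$ and the $X^{\top}GX$ identity), but your route buys a slightly shorter proof by avoiding the full-column-rank argument for $\calP_{\calR(G)}\widetilde{X}_1$, while the paper's version has the minor side benefit of exhibiting an explicit $G$-orthonormal parametrization of $\calR(G)$ that it reuses elsewhere. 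One cosmetic remark: you correctly work with squared ratios throughout, whereas the paper's displayed ratio $v^{\top}A^{\top}Av/(v^{\top}Gv)$ is really $\|\calA\|^2$; your bookkeeping is the more careful of the two.
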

\begin{proof}
	Using the expression of $\calA$ under the canonical bases, we have
	\begin{equation*}
		\|\calA\|=\max_{v\in\calR(G) \atop v\neq\mathbf{0}}\frac{\|\calA v\|_{2}}{\|v\|_{G}}
		= \max_{v\in\calR(G) \atop v\neq\mathbf{0}}\frac{v^{\top}A^{\top}Av}{v^{\top}Gv} .
	\end{equation*} 
	In the GSVD of $\{A,L\}$, we use the partition of $X$ as described in \cref{X_part} and denote $\widetilde{X}_1=(X_1 \ X_2 \ X_3)$. In the proof of \Cref{thm:wp_gsvd} we have shown that $\calN(G)=\calR(X_4)$. Now we show $\calR(G)=\calP_{\calR(G)}\calR(\widetilde{X}_1)$. We only need to show $\mathrm{dim}(\calP_{\calR(G)}\calR(\widetilde{X}_1))=r=\mathrm{rank}(G)$, which is equivalent to that $\calP_{\calR(G)}\widetilde{X}_1$ has full column rank. Suppose $\calP_{\calR(G)}\widetilde{X}_1w=\mathbf{0}$ for a $w\in\mathbb{R}^{r}$. Then $z:=\widetilde{X}_1w\in\calN(G)$. Thus, we have $0=z^{\top}Gz=w^{\top}\widetilde{X}_{1}^{\top}G\widetilde{X}_{1}z=z^{\top}z$, leading to $z=0$. This proves that $\calP_{\calR(G)}\widetilde{X}_1$ has full column rank.

	For any $v\in\calR(G)$, write $v=\calP_{\calR(G)}\widetilde{X}_1y$ with $y\in\mathbb{R}^{r}$. Then we have 
	\[v^{\top}Gv=y^{\top}\widetilde{X}_{1}^{\top}GG^{\dag}GGG^{\dag}\widetilde{X}_{1}y
	= y^{\top}\widetilde{X}_{1}^{\top}G\widetilde{X}_{1}y=\|y\|_2,\]
	and 
	\begin{equation*}
		Av = A\calP_{\calR(G)}\widetilde{X}_1y = A\widetilde{X}_1y - A\calP_{\calN(G)}\widetilde{X}_1y 
		= A\widetilde{X}_1y = U_{A}C_{A}y ,
	\end{equation*}
	where we have used $\calN(G)\subseteq\calN(A)$. It follows that $v^{\top}A^{\top}Av=\|U_{A}C_{A}y\|_2=\|C_{A}y\|_2$. Therefore, we obtain 
	\[\|\calA\| = \max_{y\in\mathbb{R}^{r} \atop y\neq\mathbf{0}}\frac{\|C_{A}y\|_2}{\|y\|_2}
	=\|C_{A}\|_2 = \sigma_{\max}(C_{A}) . \] 
	The proof is completed.
\end{proof}

Notice that $C_{A}$ is a diagonal matrix (not necessarily square). Thus, $\sigma_{\max}(C_{A})$ is the maximum value of the diagonals of $C_{A}$. For a regular matrix pair $\{A, L\}$, i.e. $G$ is nonsingular, several iterative algorithms exist that can rapidly compute the largest generalized singular values of $\{A, L\}$ \cite{Zha1996,jia2023joint}, thereby providing an accurate estimate of $\|\calA\|$. For a nonregular $\{A, L\}$, the method proposed in \cite{li2024characterizing} can accomplish the same task.

At the end of this section, we present a diagram to summarize the main ideas and findings of this paper. It illustrates how various concepts related to the LS problem and pseudoinverse have been generalized. Our results reveal the close connections between the GLS problem, weighted pseudoinverse, GSVD, and gLSQR. These insights improve theoretical understanding and offer tools for developing more effective computational methods for related applications.

\begin{figure}[!htbp]
	\centering
	\subfloat
	{\includegraphics[width=1.0\textwidth]{./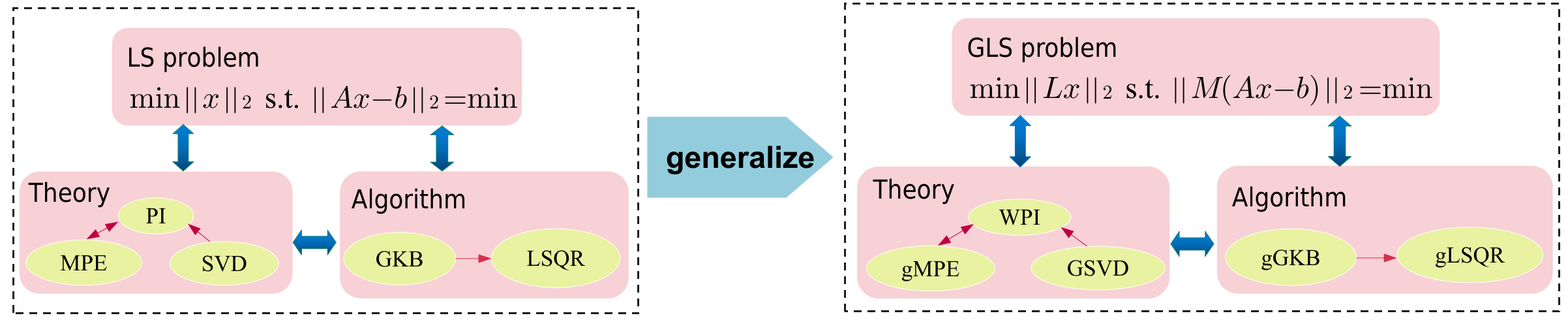}}\hspace{-0.0mm}
	\vspace{-4mm}
	\caption{The generalization of several concepts is illustrated as follows: PI and MPE stand for the Moore-Penrose pseudoinverse and Moore-Penrose equations, respectively. WPI and gMPE refer to the weighted pseudoinverse and generalized Moore-Penrose equations, respectively.}
	\label{fig0}
\end{figure}

\section{Numerical experiments}\label{sec5}
We use several numerical examples to demonstrate the performance of \textsf{gLSQR} for solving GLS problems. All the experiments are performed in MATLAB R2023b using double precision. We note that much of the existing literature on GLS problems is lacking in numerical results, partly because constructing nontrial test problems, especially for large-scale matrices, is challenging. Based on \Cref{thm:GLS_solv}, we construct a test GLS problem using the following steps:
\begin{enumerate}
	\item[(1)] Choose two matrices $A\in\mathbb{R}^{m\times n}$ and $L\in\mathbb{R}^{p\times n}$, where $m\leq n$. Compute $G=A^{\top}A+L^{\top}L$.
	\item[(2)] Construct a vector $w\in\mathcal{R}(G)$. Compute a matrix $B$ with columns that form a basis for $\calN(A)$. The true solution is constructed as 
	\begin{equation}\label{x_true}
		x^{\dag} = w - B(B^{\top}GB)^{-1}B^{\top}Gw .
	\end{equation}
	\item[(3)] Choose a vector $z\in\calR(A)^{\perp}$. Let the right-hand side vector be $b=Ax^{\dag}+z$.
\end{enumerate}
Note that \cref{x_true} ensures that $x^{\dag}$ satisfies \cref{GLS_criterion}. According to \Cref{thm:GLS_solv}, $x^{\dag}$ is the unique minimum 2-norm solution of \cref{GLS1} with $M=I_{m}$. For large-scale matrices, computing \cref{x_true} can be very challenging. Therefore, in our experiments, we only test small and medium-sized problems.

\paragraph*{Experiment 1}
The matrix $A\in\mathbb{R}^{2324\times 4486}$, named {\sf lp\_bnl2}, comes from linear programming problems and is sourced from the SuiteSparse Matrix Collection \cite{Davis2011}. The matrix $L=L_1$ is defined as the scaled discretization of the first-order differential operator:
\begin{equation*}
	L_1=\begin{pmatrix}
		1 & -1 &  &  \\
		&  \ddots & \ddots &  \\
		&  &  1  & -1 \\
	\end{pmatrix} \in \mathbb{R}^{(n-1)\times n}.
\end{equation*}
In this setup, $G$ is positive definite. We construct $w\in\mathbb{R}^{n}$ by evaluating the function $f(t)=t$ on a uniform grid over the interval $[0,1]$. To obtain the vector $z$, we compute the projection of the random vector \texttt{randn(m,1)} onto $\calR(A)^{\perp}$. In this experiment, we directly compute $G^{\dag}\bar{s}$ at each iteration of \textsf{gGKB} to simulate the exact computation.

\begin{figure}[!htbp]
	\centering
	\subfloat[]
	{\label{fig:1a}\includegraphics[width=0.45\textwidth]{./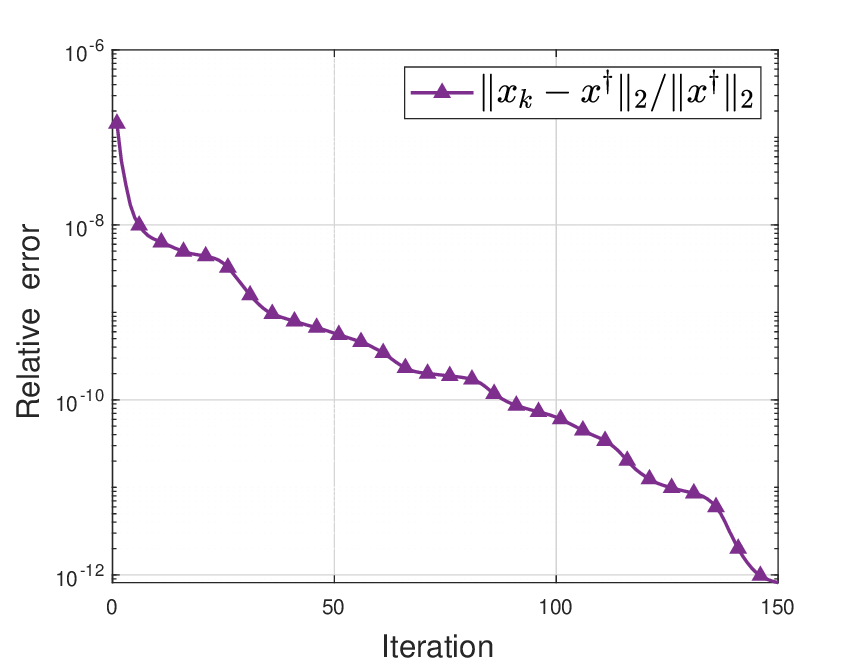}}\hspace{-2.5mm}
	\subfloat[]
	{\label{fig:1b}\includegraphics[width=0.48\textwidth]{./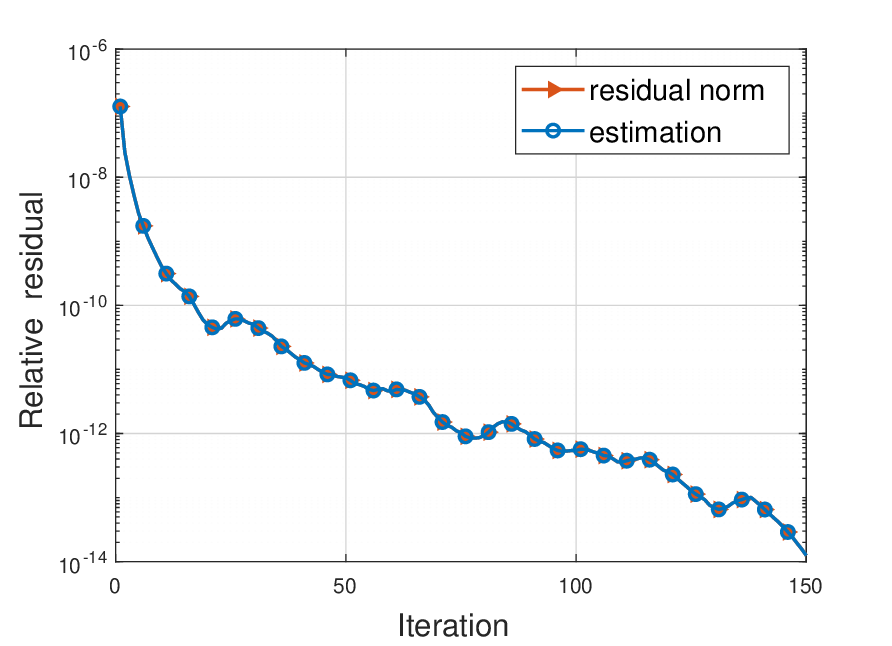}} \vspace{-0.0mm}
	\vspace{-2mm}
	\caption{Convergence history of \textsf{gLSQR} for the GLS problem with matrices \{{\sf lp\_bnl2}, $L_1$\}: (a) relative error of iterative solutions; (b) directly computed relative residual norm and its estimation.}
	\label{fig1}
\end{figure}

\begin{figure}[!htbp]
	\centering
	\subfloat
	{\label{fig:5c}\includegraphics[width=0.45\textwidth]{./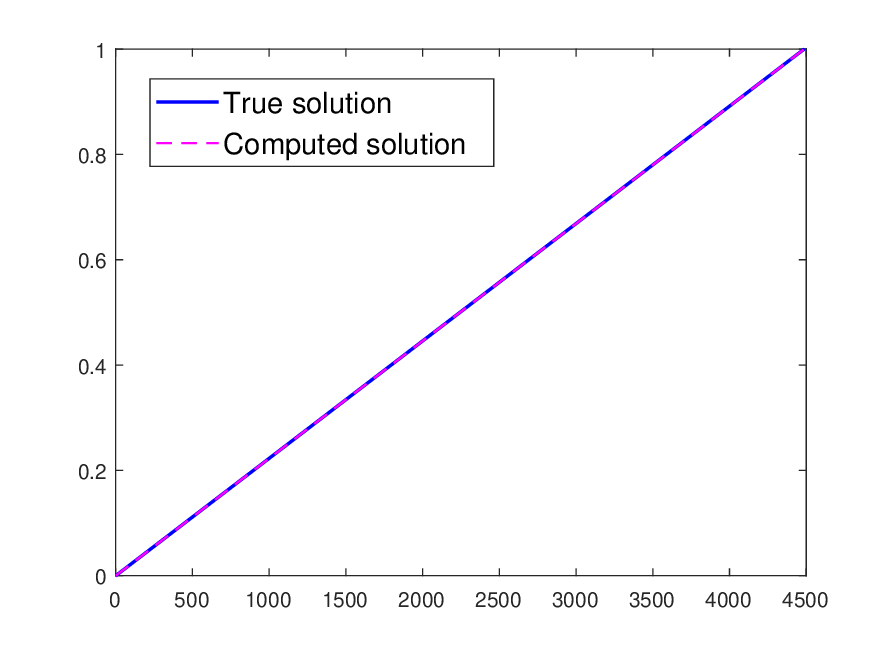}}\hspace{-0.0mm}
	\vspace{-3mm}
	\caption{True and computed solutions of the GLS problem with matrices \{{\sf lp\_bnl2}, $L_1$\}.}
	\label{fig2}
\end{figure}

The computational results obtained by \textsf{gLSQR} are displayed in \Cref{fig1} and \Cref{fig2}. For the residual norm, we use the directly computed quantity, i.e., the left-hand side of \cref{stop} as the true value, and $\alpha_{k+1}\beta_{k+1}|e_{k}^{\top}y_{k}|/(\sigma_{\max}(C_{A})\|b\|_2)$ for its estimation. These two quantities should be the same if all computations are performed accurately. This can be observed from \cref{fig:1b}, where the value gradually decreases at a very low level. The relative error curve shows that $x^{\dag}$ is gradually approximated by $x_{k}$. We plot the curve corresponding to $x_k$ at the final iteration alongside $x^{\dag}$, which shows that the two solutions match very closely.

\paragraph*{Experiment 2}
The matrix $A\in\mathbb{R}^{6334\times 7742}$ named {\sf TF15} arising from linear programming problems is taken from \cite{Davis2011}. The matrix $L=L_2$ is defined as the scaled discretization of the second-order differential operator:
\begin{equation*}
	L_2=\begin{pmatrix}
		-1 & 2 & -1 &  &  \\
		&  \ddots & \ddots & \ddots &  \\
		&  & -1 & 2 &  & -1 \\
	\end{pmatrix} \in \mathbb{R}^{(n-2)\times n}.
\end{equation*}
In this set, $G$ is positive definite. We construct $w\in\mathbb{R}^{n}$ by evaluating the function $f(t)=t^{3}-t^{2}$ on a uniform grid over the interval $[-1,1]$. In this experiment, we directly compute $G^{\dag}\bar{s}$ at each iteration of \textsf{gGKB} to simulate the exact computation.

\begin{figure}[!htbp]
	\centering
	\subfloat[]
	{\label{fig:3a}\includegraphics[width=0.45\textwidth]{./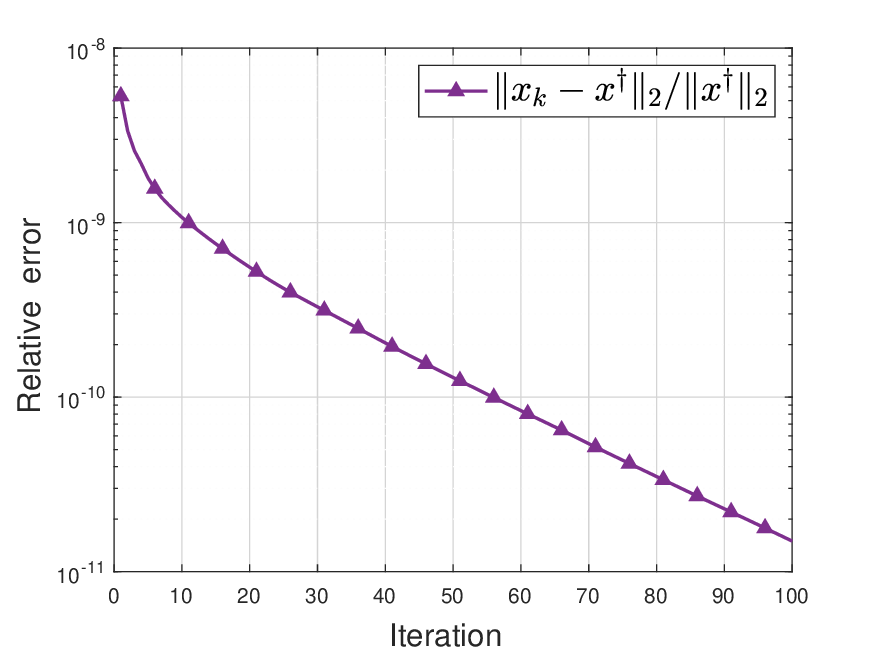}}\hspace{-2.5mm}
	\subfloat[]
	{\label{fig:3b}\includegraphics[width=0.45\textwidth]{./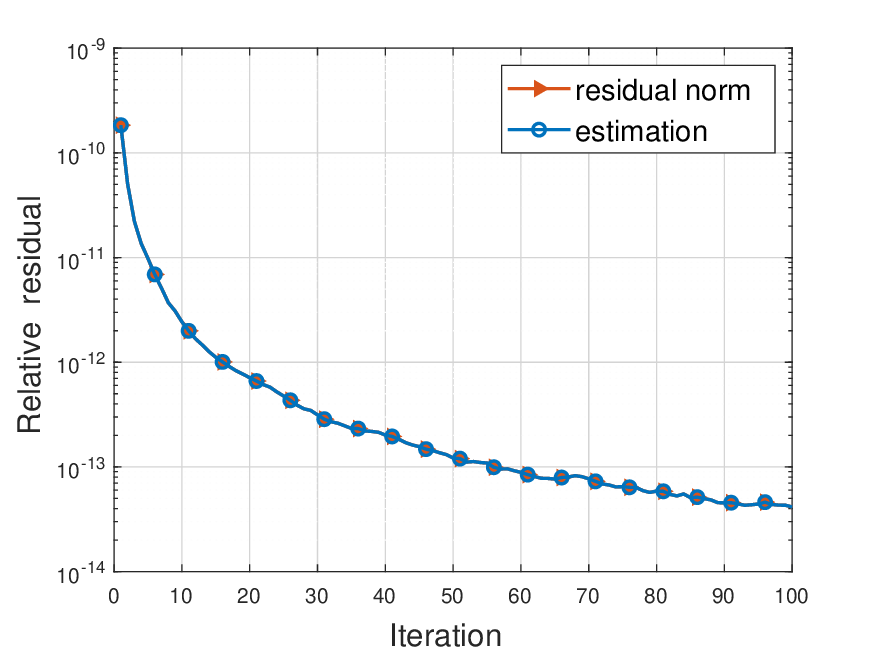}} \vspace{-0.0mm}
	\vspace{-2mm}
	\caption{Convergence history of \textsf{gLSQR} for the GLS problem with matrices \{{\sf TF15}, $L_2$\}: (a) relative error of iterative solutions; (b) directly computed relative residual norm and its estimation.}
	\label{fig3}
\end{figure}

\begin{figure}[!htbp]
	\centering
	\subfloat
	{\label{fig:4}\includegraphics[width=0.45\textwidth]{./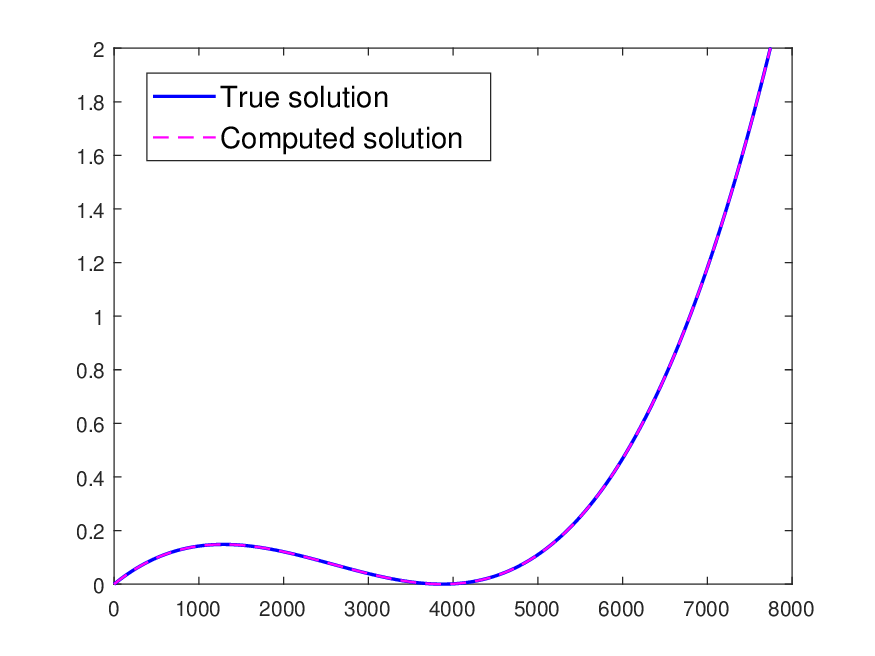}}\hspace{-0.0mm}
	\vspace{-3mm}
	\caption{True and computed solutions of the GLS problem with matrices \{{\sf TF15}, $L_2$\}.}
	\label{fig4}
\end{figure}

The computational results obtained with \textsf{gLSQR} are presented in \Cref{fig3} and \Cref{fig4}, which are very similar to the first experiment. The results demonstrate the effectiveness of \textsf{gLSQR} in iteratively solving the GLS problem.

\paragraph*{Experiment 3}
The matrix $A\in\mathbb{R}^{3700\times 8291}$, named {\sf ch} and originating from linear programming problems, is taken from \cite{Davis2011}.  Here, $L$ is set as $L=L_{1}$. In this set, $G$ is positive definite. We construct $w\in\mathbb{R}^{n}$ by evaluating the function $f(t)=\sin(5t)-2\cos(t)$ on a uniform grid over the interval $[-\pi,\pi]$. 

\begin{figure}[!htbp]
	\centering
	\subfloat[]
	{\label{fig:5a}\includegraphics[width=0.45\textwidth]{./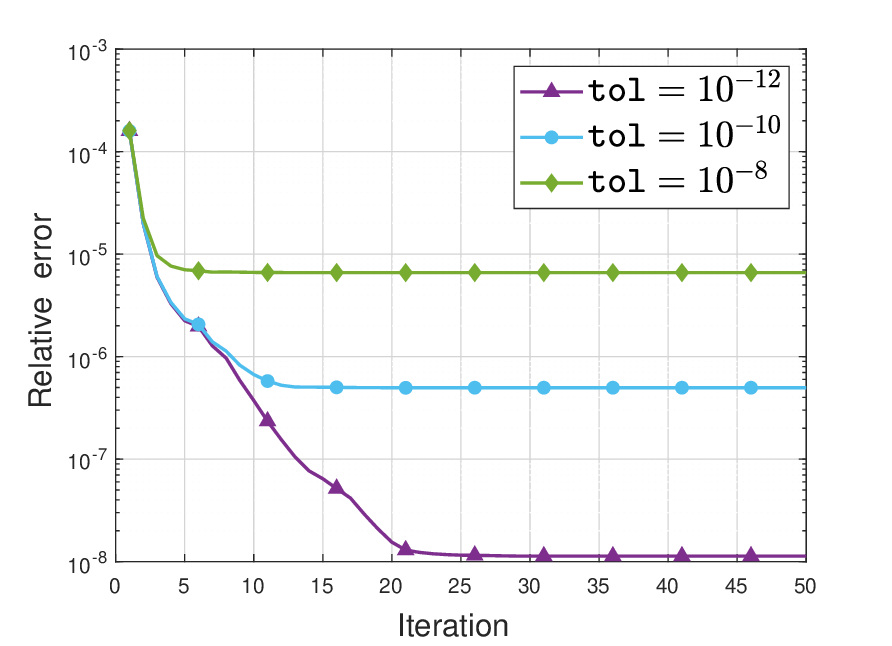}}\hspace{-2.5mm}
	\subfloat[]
	{\label{fig:5b}\includegraphics[width=0.43\textwidth]{./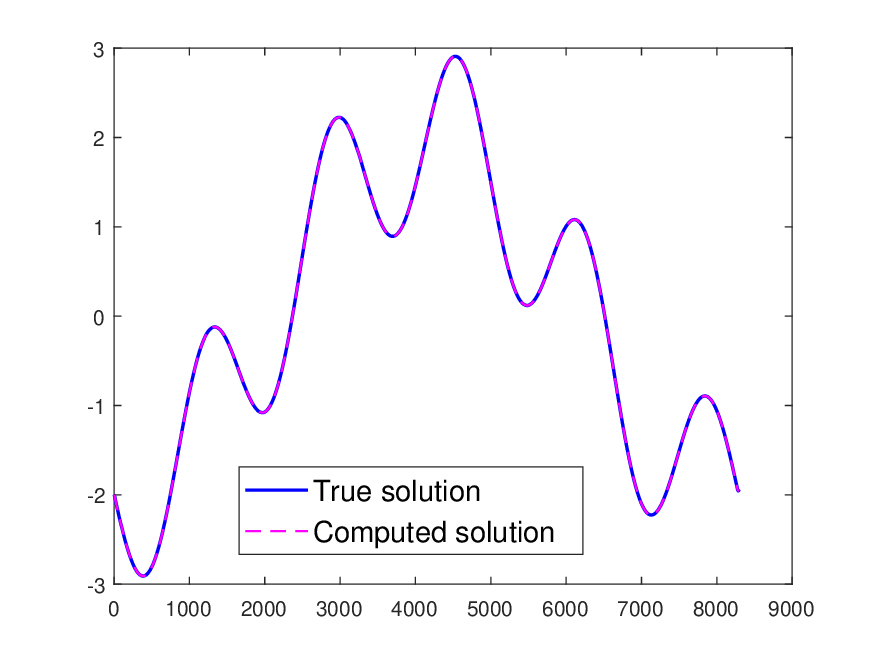}} \vspace{-0.0mm}
	\vspace{-2mm}
	\caption{Computed results of the GLS problem with matrices \{{\sf ch}, $L_1$\} by \textsf{gLSQR}, where at each iteration we use LSQR to approximate $s=G^{\dag}\bar{s}$ by solving $\min_{s}\|Gs-\bar{s}\|_2$ with different stopping tolerance value $\tau$: (a) relative error of iterative solutions with different $\tau$; (b) true and computed solutions with $\tau=10^{-8}$.}
	\label{fig5}
\end{figure}

We use this example to examine how the inaccurate computation of $s=G^{\dag}\bar{s}$ affects the numerical behavior of \textsf{gLSQR}. At each iteration, we approximately compute $s=G^{\dag}\bar{s}$ using MATLAB's built-in function \texttt{lsqr.m} to iteratively solve $\min_{s}\|Gs-\bar{s}\|_2$. The stopping tolerance value $\tau$ for \texttt{lsqr.m} is set to three different values. From \Cref{fig:5a}, we observe that the value of $\tau$ significantly impacts the final accuracy of $x_k$, with the accuracy being approximately on the order of $\mathcal{O}(\tau)$. We suspect that the final accuracy may be influenced by both the value of $\tau$ and the condition number of $G$. This should be explored further in future work.

Although constructing a very large-scale test example is difficult, we note that the main computational bottleneck of \textsf{gLSQR} is the computation of $G^{\dag}\bar{s}$. If a sparse Cholesky factorization of $G$ is available, it can be computed using a direct solver. Otherwise, an iterative solver such as LSQR or conjugate gradient (CG) is the only option. In this case, employing an effective preconditioner is crucial to accelerate the convergence for solving $\min_{s}\|Gs-\bar{s}\|_2$. Future work will involve constructing more large-scale test problems to evaluate the algorithm and exploring additional theoretical and computational aspects to enhance the performance of \textsf{gLSQR}.

\section{Conclusion}\label{sec6}
In this paper, we provide a new interpretation of the weighted pseudoinverse arising from the GLS problem $\min\|Lx\|_2 \ \mathrm{s.t.} \ \|M(Ax-b)\|_2=\min$. By introducing the linear operator $\calA:\calX= (\calR(G),\langle\cdot,\cdot\rangle_{G}) \rightarrow (\calR(P),\langle\cdot,\cdot\rangle_{P}), \ v \mapsto \calP_{\calR(P)}Av$ with $P=M^{\top}M$ and $G=A^{\top}PA+L^{\top}L$, we have shown that the minimum 2-norm solution of the GLS problem is the minimum $\calX$-norm solution of the LS problem $\min_{x\in\calX}\|\calA x-\calP_{\calR(P)}b\|_{P}$. Consequently, the weighted pseudoinverse $A_{ML}^{\dag}$ is shown to be equivalent to $\calA^{\dag}\calP_{\calR(P)}$ under the canonical bases. With this new interpretation of $A_{ML}^{\dag}$, we have derived a set of generalized Moore-Penrose equations that completely characterize the weighted pseudoinverse, and provided a closed-form expression for $A_{IL}^{\dag}$ using the GSVD of $\{A,L\}$. We have generalized the GKB process and proposed the \textsf{gLSQR} algorithm for iteratively computing $A_{ML}^{\dag}b$ for any given $b\in\mathbb{R}^{m}$, which allows us to compute the minimum 2-norm solution of the GLS problem. Several numerical examples have been used to test the \textsf{gLSQR} algorithm, demonstrating that it can compute the solution of a GLS problem with satisfactory accuracy. 

The results in this paper suggest that the closely related concepts---GLS, weighted pseudoinverse, GSVD, gGKB, and gLSQR---are appropriate generalizations of the classical concepts LS, pseudoinverse, SVD, GKB, and LSQR. This provides new tools for both analysis and computation of related applications.


\bibliographystyle{siamplain}
\bibliography{references}

\end{document}